\begin{document}

\begin{frontmatter}   %%  Title, information about author, abstract, etc.

\titledata{An equivalent formulation of the \mbox{Fan-Raspaud} Conjecture and related problems}           % title of the paper
{}                 % footnote on the title -- empty if not required

\authordata{Giuseppe Mazzuoccolo}            % First author name
{Dipartimento di Informatica, Universit\`{a} di Verona, Strada Le Grazie 15, Verona, Italy}     %{Institution, Address, City, Country}    % Affiliation and address
{giuseppe.mazzuoccolo@univr.it}                     % E-mail address
{}          % Footnote on the first author (grant number, thanks, 
                                         % web page, etc.) -- empty in not required

\authordata{Jean Paul Zerafa}            % Second author
{Dipartimento di Scienze Fisiche, Informatiche e Matematiche,\\ Universit\`{a} di Modena e Reggio Emilia, Via Campi 213/B, Modena, Italy} 
{jeanpaul.zerafa@unimore.it}
{}                                       % No footnote!

\keywords{Cubic graph, perfect matching, oddness, \mbox{Fan-Raspaud} Conjecture, \mbox{Berge-Fulkerson} Conjecture, Petersen-colouring.}               % Keywords
\msc{05C15, 05C70}                       % Math. Subj. Class. codes

\begin{abstract}
In 1994, it was conjectured by Fan and Raspaud that every simple bridgeless cubic graph has three perfect matchings whose intersection is empty. In this paper we answer a question recently proposed by Mkrtchyan and Vardanyan, by giving an equivalent formulation of the \mbox{Fan-Raspaud} Conjecture. We also study a possibly weaker conjecture originally proposed by the first author, which states that in every simple bridgeless cubic graph there exist two perfect matchings such that the complement of their union is a bipartite graph. Here, we show that this conjecture can be equivalently stated using a variant of \mbox{Petersen-colourings}, we prove it for graphs having oddness at most four and we give a natural extension to bridgeless cubic multigraphs and to certain cubic graphs having bridges.
\end{abstract}

\end{frontmatter}   %% End of the front matter

%% Your article

\section{Introduction and terminology} 
Many interesting problems in graph theory are about the behaviour of perfect matchings in cubic graphs. One of the early classical results was made by Petersen \cite{Petersen} and states that every bridgeless cubic graph has at least one perfect matching. Some years ago, one of the most prominent conjectures in this area was completely solved by Esperet et al. in \cite{EsperetLovaszPlummer}: the conjecture, proposed by Lov\'{a}sz and Plummer in the 1970s, stated that the number of perfect matchings in a bridgeless cubic graph grows exponentially with its order (see \cite{LovaszPlummer}). However, many others are still open, such as Conjecture \ref{Conjecture BF} proposed independently by Berge and Fulkerson in the 1970s as well, and Conjecture \ref{Conjecture FR} by Fan and Raspaud (see \cite{BergeFulkerson} and \cite{FanRaspaud}, respectively). These two conjectures are related to the behaviour of the union and intersection of sets of perfect matchings, and properties of this kind are already largely studied: see, amongst others, \cite{AbreuEtAl,BonisoliCariolaro,JinSteffen,FractionalPM,KaiserRaspaud,LinWang,MacajovaSkovieraOddness2,MazzuoccoloCovering,MiaoWangZhang,Steffen,ZhuTangZhang}.
In this paper we prove that a seemingly stronger version of the \mbox{Fan-Raspaud} Conjecture is equivalent to the classical formulation (Theorem \ref{Theorem Main Equivalence FR}), and so to another interesting formulation proposed in \cite{MacajovaSkovieraOddCuts} (see also \cite{KMPRSS}). In the second part of the paper (Section \ref{Section S4} and Section \ref{Section New Results S4}), we study a weaker conjecture proposed by the first author in \cite{MazzuoccoloS4}: we show how we can state it in terms of a variant of Petersen-colourings (Proposition \ref{Proposition Equivalence}) and we prove it for cubic graphs of oddness four (Theorem \ref{TheoremOddness4}). Although all mentioned conjectures are about simple cubic graphs without bridges, we extend our study of the union of two perfect matchings to bridgeless cubic multigraphs and to particular cubic graphs having bridges (Section \ref{Section Multigraphs} and Section \ref{Section Graphs with bridges}).

Graphs considered in the sequel, unless otherwise stated, are simple connected bridgeless cubic graphs and so do not contain loops and parallel edges. Graphs that may contain parallel edges will be referred to as \emph{multigraphs}. For a graph $G$, let $V(G)$ and $E(G)$ be the set of vertices and the set of edges of $G$, respectively. A \textit{matching} of $G$ is a subset of $E(G)$ such that any two of its edges do not share a common vertex. For an integer $k\geq 0$, a \emph{k-factor} of $G$ is a spanning subgraph of $G$ (not necessarily connected) such that the degree of every vertex is $k$. The edge-set of a 1-factor is said to be a \emph{perfect matching}. The least number of odd cycles amongst all 2-factors of $G$, denoted by $\omega(G)$, is called the \emph{oddness} of $G$, and is clearly even for a cubic graph since $G$ has an even number of vertices. For $M \subseteq E(G)$, we denote the graph $G\setminus M$ by $\overline{M}$. In particular, when $M$ is a perfect matching of $G$, then $\overline{M}$ is a 2-factor of $G$. In this case, following the terminology used for instance in \cite{FiolMazSteffen}, if $\overline{M}$ has $\omega(G)$ odd cycles, then $M$ is said to be a \emph{minimal perfect matching}.

A \emph{cut} in $G$ is any set $X\subseteq E(G)$ such that $\overline{X}$ has more components than $G$, and no proper subset of $X$ has this property, i.e.  for any $X'\subset X$, $\overline{X'}$ does not have more components than $G$. The set of edges with precisely one end in $W\subseteq V(G)$ is denoted by $\partial_{G}W$, or just $\partial W$ when it is obvious to which graph we are referring. Moreover, a cut $X$ is said to be \emph{odd} if there exists a subset $W$ of $V(G)$ having odd cardinality such that $X=\partial W$.

We next define some standard operations on graphs that will be useful in the sequel. Let $G_{1}$ and $G_{2}$ be two bridgeless graphs (not necessarily cubic), and let $e_{1}$ and $e_{2}$ be two edges such that $e_{1}=u_{1}v_{1} \in E(G_{1})$ and $e_{2}=u_{2}v_{2} \in E(G_{2})$. A \emph{$2$-cut connection} on $u_{1}v_1$ and $u_{2}v_2$ is a graph operation that consists of constructing the new graph $$[G_{1}-e_{1}]\cup[G_{2}-e_{2}]\cup\{u_{1}u_{2}, v_{1}v_{2}\},$$ and denoted by $G_{1}(u_{1}v_{1})*G_{2}(u_{2}v_{2})$. It is clear that another possible graph obtained by a 2-cut connection on $e_{1}$ and $e_2$ is $G_{1}(u_{1}v_{1})*G_{2}(v_{2}u_{2})$. Clearly, the two graphs obtained are bridgeless, and, unless otherwise stated, if it is not important which of these two graphs is obtained, we use the notation $G_{1}(e_{1})*G_{2}(e_{2})$ and we say that it is a graph obtained by a $2$-cut connection on $e_{1}$ and $e_{2}$. 

Now, let $G_{1}$ and $G_{2}$ be two bridgeless cubic graphs, $v_{1}\in V(G_{1})$ and $v_{2}\in V(G_{2})$ such that the vertices adjacent to $v_{1}$ are $x_{1},y_{1}$ and $z_{1}$, and those adjacent to $v_{2}$ are $x_{2},y_{2}$ and $z_{2}$. A \emph{$3$-cut connection} (sometimes also known as the star product, see for instance \cite{StarProduct}) on $v_{1}$ and $v_{2}$ is a graph operation that consists of constructing the new graph $$[G_{1}-v_{1}]\cup[G_{2}-v_{2}]\cup\{x_{1}x_{2}, y_{1}y_{2}, z_{1}z_{2}\},$$ and denoted by $G_{1}(x_{1}y_{1}z_{1})*G_{2}(x_{2}y_{2}z_{2})$. The 3-edge-cut $\{x_{1}x_{2}, y_{1}y_{2}, z_{1}z_{2}\}$ is referred to as the \emph{principal 3-edge cut} (see for instance \cite{FouquetVanherpe}). As in the case of $2$-cut connections, other graphs can be obtained by a 3-cut connection on $v_{1}$ and $v_{2}$, and, unless otherwise stated, if it is not important how the adjacencies in the principal 3-edge cut look like, we use the notation $G_{1}(v_{1})*G_{2}(v_{2})$ and we say that it is a graph obtained by a $3$-cut connection on $v_{1}$ and $v_{2}$. It is clear that any resulting graph is also bridgeless and cubic.

\section{A list of relevant conjectures}\label{Section List Conjectures}
One of the aims of this paper is to study the behaviour of perfect matchings in cubic graphs, more specifically the union of two perfect matchings (see Section \ref{Section S4} and Section \ref{Section New Results S4}). We relate this to well-known conjectures stated here below, in particular: the \mbox{Berge-Fulkerson} Conjecture and the \mbox{Fan-Raspaud} Conjecture. 

% In our main result we solve a problem posed in \cite{Vahan} and give an equivalent formulation of the \mbox{Fan-Raspaud} Conjecture (see Theorem \ref{Theorem Main Equivalence FR}). 

\begin{conjecture}[\mbox{Berge-Fulkerson} \cite{BergeFulkerson}]\label{Conjecture BF}
Every bridgeless cubic graph $G$ admits six perfect matchings $M_{1}, \ldots, M_{6}$ such that any edge of $G$ belongs to exactly two of them.
\end{conjecture}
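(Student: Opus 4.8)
This is the Berge--Fulkerson Conjecture, one of the central open problems in the area, so what follows is a description of the natural line of attack rather than a genuine proof sketch; the honest ``main obstacle'' is that no method is currently known that settles it. \emph{Step 1: reduce to highly connected graphs.} Using the $2$-cut and $3$-cut connections recalled above, one would remove nontrivial edge cuts of size $2$ and $3$: given such a cut, split $G$ into two smaller bridgeless cubic graphs $G_{1}$ and $G_{2}$, apply the conjecture inductively to each, and glue the resulting Fulkerson covers. The gluing step relies on the rigid local structure of a Fulkerson cover: since each of the six perfect matchings meets a vertex in exactly one of its three edges and each edge lies in exactly two matchings, the counts $2+2+2=6$ force the three edges at every vertex to partition the six matchings into three pairs. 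Consequently only finitely many ``boundary patterns'' can occur on a $2$- or $3$-edge cut, and a short case analysis should show that the covers of $G_{1}$ and $G_{2}$ can be permuted into agreement along the cut, reducing the conjecture to, say, cyclically $4$-edge-connected cubic graphs (and, with more care, to graphs of girth at least $5$).

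\emph{Step 2: build the six matchings from a sparse $2$-factor.} On the reduced graphs one would start from a minimal perfect matching $M$, so that $\overline{M}$ is a $2$-factor with only $\omega(G)$ odd cycles, and try to generate the remaining matchings by switching $M$ along the even cycles of $\overline{M}$ and along carefully chosen paths connecting the odd ones. It is convenient here to pass to the weaker (and, by a result of the first author, equivalent) formulation in which one only asks for \emph{five} perfect matchings whose union is all of $E(G)$; see \cite{MazzuoccoloCovering}. In the $3$-edge-colourable case $\omega(G)=0$ everything is immediate, and for $\omega(G)=2$ the statement is known, so the realistic concrete sub-goal --- very much in the spirit of Theorem~\ref{TheoremOddness4} --- would be to carry this through for $\omega(G)=4$ by a detailed analysis of how the two pairs of odd cycles are linked by the rest of the graph.

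I expect the plan above to stall exactly where every approach to Conjecture~\ref{Conjecture BF} stalls: it implies both the Cycle Double Cover Conjecture and the Fan--Raspaud Conjecture, and there is no tool that controls the perfect matchings of an \emph{arbitrary} bridgeless cubic graph tightly enough to push the inductive/switching argument past the highly structured cases. Thus any rigorous outcome along these lines will be conditional --- bounding the oddness (or some other width parameter), or assuming a stronger equivalent statement such as the existence of a Petersen-colouring --- and an unconditional proof would require a genuinely new idea that this plan does not supply.
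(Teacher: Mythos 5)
The statement you were asked about is Conjecture~\ref{Conjecture BF}, the Berge--Fulkerson Conjecture. The paper does not prove it --- it is stated as a conjecture and used only as the strongest node in a hierarchy of implications (Berge--Fulkerson $\Rightarrow$ Fan--Raspaud $\Rightarrow$ Conjecture~\ref{ConjectureOddCut} $\Rightarrow$ the $S_4$-Conjecture). You correctly recognise that it is a famous open problem and you do not claim to prove it, so there is no proof to compare against and no gap to fault you for in the usual sense: your submission is an honest survey of a natural (and well-trodden) line of attack, not a proof.

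Two small factual corrections to your write-up. First, the equivalence with the ``five perfect matchings covering $E(G)$'' (Berge) formulation is indeed a theorem of the first author, but it is proved in \emph{The equivalence of two conjectures of Berge and Fulkerson} (J.\ Graph Theory, 2011), not in \cite{MazzuoccoloCovering}, which is a different paper on covering cubic graphs with perfect matchings. Second, your claim that Berge--Fulkerson implies the Cycle Double Cover Conjecture is not known to be true: a Fulkerson cover yields six $2$-factors covering each edge exactly four times, which is not a cycle double cover, and no implication between the two conjectures has been established. The implication to Fan--Raspaud, and hence to everything studied in this paper, is of course correct and is exactly the role Conjecture~\ref{Conjecture BF} plays here.
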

\begin{figure}[h]
      \centering
      \includegraphics[width=1\textwidth]{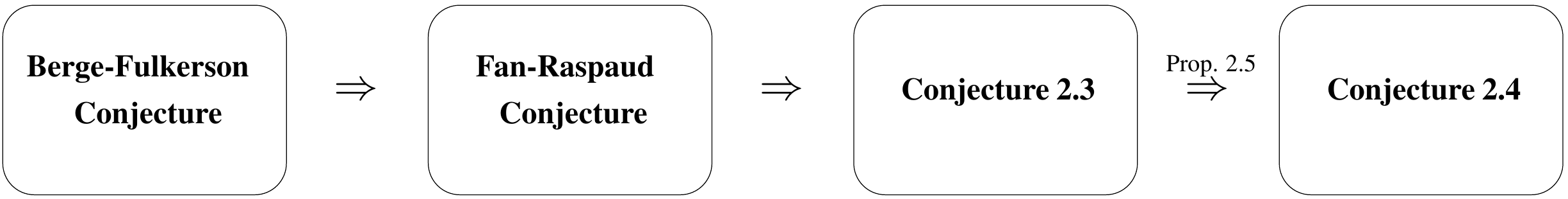}
	  \caption{Conjectures mentioned and how they are related.}
      %\label{}
\end{figure}

We also state here other (possibly weaker) conjectures implied by the above conjecture.

\begin{conj}[\mbox{Fan-Raspaud} \cite{FanRaspaud}]\label{Conjecture FR}
Every bridgeless cubic graph admits three perfect matchings $M_{1}, M_{2},$ and $M_{3}$ such that $M_{1}\cap M_{2}\cap M_{3}=\emptyset$.
\end{conj}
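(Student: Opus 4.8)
The final statement is the \mbox{Fan-Raspaud} Conjecture itself, a long-standing open problem; accordingly, what follows is not a complete proof but the natural line of attack together with the point at which it stalls. First I would reduce to snarks: if $G$ is $3$-edge-colourable, then its three colour classes $M_{1},M_{2},M_{3}$ are pairwise disjoint perfect matchings, and hence $M_{1}\cap M_{2}\cap M_{3}=\emptyset$ trivially. Next, using the $2$-cut connection and $3$-cut connection operations introduced above, I would argue that a smallest counterexample must be cyclically $4$-edge-connected: given a non-trivial $2$-edge-cut or $3$-edge-cut, one splits $G$ into two smaller bridgeless cubic graphs, invokes the conjecture (assumed for smaller graphs) on each piece, and glues the two resulting triples of perfect matchings along the cut, matching up which cut edges each matching uses. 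For a $2$-cut this gluing is routine; for a $3$-cut one must, for each of the three matchings of each piece, keep track of which of the three principal cut edges it meets.

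The core of the problem is therefore cyclically $4$-edge-connected snarks, and here I see three standard handles, none of them known to suffice. One could try to deduce the conjecture from the \mbox{Berge-Fulkerson} Conjecture (Conjecture \ref{Conjecture BF}): given six perfect matchings covering every edge exactly twice, no edge lies in three of them, so any three of the six already have empty intersection; but \mbox{Berge-Fulkerson} is itself open, so this merely trades the problem for a stronger one. Alternatively, one could fix a perfect matching $M_{1}$, analyse the $2$-factor $\overline{M_{1}}$, and look for $M_{2},M_{3}$ whose traces on $\overline{M_{1}}$ force $M_{1}\cap M_{2}\cap M_{3}=\emptyset$; this is most cleanly phrased via a Petersen-type colouring, exactly as is done later in this paper for the weaker conjecture on the union of two perfect matchings. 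Or one could induct on the oddness $\omega(G)$, using the fact that the conjecture is known for small oddness and hoping for a surgery that lowers oddness while preserving a triple of perfect matchings with empty intersection.

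The main obstacle, and the reason the conjecture is still open, is that none of these handles reaches cyclically $4$-edge-connected snarks of large oddness: there is no local cut on which to induct, the available oddness-reducing surgeries do not visibly preserve a good triple of perfect matchings, and the derivation from \mbox{Berge-Fulkerson} is not unconditional. What I would realistically aim for instead, and what this paper in fact provides, is (i) an equivalent ``stronger'' reformulation of the conjecture that pins down the shape of any hypothetical counterexample, and (ii) unconditional proofs of genuinely weaker statements for cubic graphs of bounded oddness.
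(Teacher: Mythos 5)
This statement is the Fan--Raspaud Conjecture itself; the paper states it as an open conjecture and offers no proof, so there is nothing to compare your attempt against. You are right to present a survey of attack lines rather than a proof, and your reductions (the $3$-edge-colourable case, the implication from \mbox{Berge-Fulkerson} by taking any three of the six matchings) match what the paper records.

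One substantive correction: you assert that a smallest counterexample ``must be cyclically $4$-edge-connected,'' with the gluing across a $2$-cut being ``routine.'' The paper explicitly states the opposite at the end of Section \ref{Section FR}: cyclic $4$-edge-connectivity of a minimal counterexample is known only for the stronger equivalent formulation (Conjecture \ref{Conjecture FR**}), and ``it remains unknown whether a smallest counterexample to the original formulation \ldots has the same property.'' The obstruction is exactly the step you wave through. To glue FR-triples across a $2$-cut or $3$-cut you must match the frequencies of the cut edges in the two pieces, i.e.\ you need Conjecture \ref{Conjecture FR*} or \ref{Conjecture FR**} for the pieces, not just Conjecture \ref{Conjecture FR}. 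The known equivalences between these statements are global (they pass through auxiliary graphs such as $K_4^*$ and $P^*$ built from several copies of $G$, hence larger than $G$), so they do not localize to a minimal-counterexample induction: knowing FR for all graphs smaller than $G$ does not give you FR* for those graphs. So the reduction to cyclically $4$-edge-connected snarks is itself open for the original conjecture, and your outline should not present it as settled.
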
 
In the sequel we will refer to three perfect matchings satisfying Conjecture \ref{Conjecture FR} as an \emph{FR-triple}. We can see that Conjecture \ref{Conjecture FR} is immediately implied by the \mbox{Berge-Fulkerson} Conjecture, since we can take any three perfect matchings out of the six which satisfy Conjecture \ref{Conjecture BF}. A still weaker statement implied by the \mbox{Fan-Raspaud} Conjecture is the following:
\begin{conj}\cite{MacajovaSkovieraOddCuts}\label{ConjectureOddCut}
For each bridgeless cubic graph $G$, there exist two perfect matchings $M_{1}$ and $M_{2}$ such that $M_{1}\cap M_{2}$ contains no \mbox{odd-cut} of $G$. 
\end{conj}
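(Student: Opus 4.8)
Rather than deriving the statement from the Fan--Raspaud Conjecture, the plan is to attack Conjecture \ref{ConjectureOddCut} directly, by induction on $|V(G)|$ along the $3$-cut decomposition of bridgeless cubic graphs, after first disposing of the trivial odd cuts. The starting observation is a parity fact: for any $W\subseteq V(G)$ one has $|\partial W|\equiv |W|\pmod 2$ (since $3|W|=2\,e(G[W])+|\partial W|$, where $e(G[W])$ counts the edges inside $W$), so odd cuts have odd size, the smallest being the trivial cuts $\partial\{v\}$ of size three. A perfect matching $M$ meets every odd cut $\partial W$ in an odd number of edges, and in particular meets a trivial cut in exactly one edge; hence for any two perfect matchings $M_{1},M_{2}$ the set $M_{1}\cap M_{2}\subseteq M_{1}$ contains at most one edge of each $\partial\{v\}$, so no trivial odd cut can occur in the intersection. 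It therefore suffices to avoid the nontrivial odd cuts.

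Next I would establish the inductive step across a nontrivial $3$-edge-cut. If $G$ is not cyclically $4$-edge-connected, then it has a nontrivial $3$-edge-cut and can be written as a $3$-cut connection $G=G_{1}(v_{1})*G_{2}(v_{2})$ with $G_{1},G_{2}$ smaller bridgeless cubic graphs; the principal $3$-edge cut $\{x_{1}x_{2},y_{1}y_{2},z_{1}z_{2}\}$ is itself a nontrivial odd cut of $G$, since its side $V(G_{1})\setminus\{v_{1}\}$ has odd size. A perfect matching of $G$ meets this cut in one or three edges, and in the meets-in-one case it restricts to perfect matchings of $G_{1}$ and $G_{2}$ using the corresponding edge at $v_{1}$ and at $v_{2}$. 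The idea is to take inductive pairs $(M_{1}^{i},M_{2}^{i})$ on each $G_{i}$, glue them into a pair $(M_{1},M_{2})$ on $G$, and arrange that at least one of the two glued matchings meets the principal cut in a single edge, so that the principal cut does not lie in $M_{1}\cap M_{2}$. For this to run, the inductive hypothesis must be strengthened to provide, for each prescribed edge incident to $v_{i}$, a valid pair whose intersection avoids all nontrivial odd cuts and which uses that edge at $v_{i}$; one also has to verify that every nontrivial odd cut of $G$ is either the principal cut or corresponds to a nontrivial odd cut of $G_{1}$ or $G_{2}$, i.e.\ control the behaviour of odd cuts that cross the principal cut under the star product, so that avoidance on the pieces transfers to $G$.

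The base case, and what I expect to be the genuine obstacle, is the cyclically $4$-edge-connected case. Here the only $3$-edge-cuts are the trivial ones, already handled, but there can be nontrivial odd cuts of size $5,7,\dots$, and these cannot be removed by further decomposition. Avoiding all of them at once is a global constraint: one must choose $M_{1},M_{2}$ so that for every odd set $W$ at least one of $M_{1},M_{2}$ meets $\partial W$ in fewer than $|\partial W|$ edges. A natural line of attack is to fix one minimal perfect matching $M_{1}$, so that $\overline{M_{1}}$ realises the oddness, and then exploit the rich supply of perfect matchings in bridgeless cubic graphs to select $M_{2}$ breaking each large odd cut, for instance by alternating-cycle modifications of $M_{1}$ along $M_{1}\triangle M_{2}$. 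Controlling all large odd cuts simultaneously, with no remaining cut structure on which to induct, is precisely where the argument resists, and is the crux of the difficulty in the general statement.
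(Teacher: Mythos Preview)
The statement you are attempting to prove is Conjecture~\ref{ConjectureOddCut}, and it is stated in the paper precisely as an \emph{open conjecture}: the paper does not prove it. What the paper does establish is only its position in the chain of implications in Section~\ref{Section List Conjectures}, namely that it is implied by the Fan--Raspaud Conjecture (any two matchings from an FR-triple work) and that it in turn implies the $S_4$-Conjecture. There is therefore no ``paper's own proof'' to compare against.

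Your proposal is not a proof either, and you essentially say so yourself. The preliminary observations are sound: the parity computation $3|W|=2\,e(G[W])+|\partial W|$ does give $|\partial W|\equiv |W|\pmod 2$, and the argument that trivial $3$-cuts $\partial\{v\}$ can never lie in $M_1\cap M_2$ is correct. The reduction across a nontrivial $3$-edge-cut is plausible in outline, though the strengthened inductive hypothesis you describe (prescribing which edge at $v_i$ the pair uses) would itself need to be proved in the base case, so it does not buy anything unless the base case goes through. And the base case --- cyclically $4$-edge-connected graphs, where one must simultaneously avoid all odd cuts of size $5,7,\dots$ --- is exactly the content of the conjecture for that class; you correctly identify this as ``the genuine obstacle'' and offer no mechanism to overcome it. An alternating-cycle modification of a minimal perfect matching gives no control over large odd cuts in general.

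In short: you have written a reasonable heuristic discussion of why the conjecture is hard, together with a reduction to the cyclically $4$-edge-connected case, but no proof. Since the conjecture is open, that is the expected outcome.
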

We claim that any two perfect matchings out of the three in an \mbox{FR-triple} have no \mbox{odd-cut} in their intersection, in other words that Conjecture \ref{Conjecture FR} implies Conjecture \ref{ConjectureOddCut}. For, suppose not. Then, without loss of generality, suppose that $M_{2}\cap M_{3}$ contains an \mbox{odd-cut} $X$. Hence, since every perfect matching has to intersect an \mbox{odd-cut} at least once, \break $\vert M_{1}\cap (M_{2}\cap M_{3})\vert \geq \vert M_{1} \cap X\vert\geq 1$, a contradiction, since we assumed that $M_{1}\cap M_{2}\cap M_{3}$ is empty. In relation to the above, the first author proposed the following conjecture:
\begin{conj}[$S_4$-Conjecture \cite{MazzuoccoloS4}]\label{Conjecture S4 Matchings}
For any bridgeless cubic graph $G$, there exist two perfect matchings such that the deletion of their union leaves a bipartite subgraph of $G$. 
\end{conj}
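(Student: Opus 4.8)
The plan is to first recast Conjecture \ref{Conjecture S4 Matchings} so that it concerns a single $2$-factor. If $M_{1},M_{2}$ are perfect matchings of $G$ and $F_{1}=\overline{M_{1}}$ is the associated $2$-factor, then $\overline{M_{1}\cup M_{2}}=F_{1}\setminus M_{2}$; since $F_{1}$ is a disjoint union of cycles, deleting the edges of $M_{2}$ that lie on a given cycle breaks it into paths, whereas a cycle disjoint from $M_{2}$ survives intact. Hence $\overline{M_{1}\cup M_{2}}$ is bipartite if and only if every odd cycle of $F_{1}$ contains at least one edge of $M_{2}$. I would therefore prove the statement in the equivalent form: \emph{every bridgeless cubic graph admits a $2$-factor $F$ and a perfect matching $M$ such that $M$ meets every odd cycle of $F$}. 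It is natural to take $F=\overline{M_{1}}$ to be the complement of a minimal perfect matching, so that only $\omega(G)$ odd cycles have to be hit.

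Second, I would reduce to cyclically $4$-edge-connected graphs by means of the operations introduced above. If $G$ has a nontrivial $2$-edge-cut or $3$-edge-cut, then $G=G_{1}(e_{1})*G_{2}(e_{2})$ or $G=G_{1}(v_{1})*G_{2}(v_{2})$ for smaller bridgeless cubic graphs $G_{1},G_{2}$. Assuming the reformulated statement for $G_{1}$ and $G_{2}$, I would glue the two $2$-factors and the two perfect matchings across the principal edge cut. The main points are to choose the solutions on the two sides so that they \emph{agree} on whether they use their special edge (this is forced at a $2$-cut, since a $2$-factor must keep degree $2$ at the cut vertices), and to ensure that an odd cycle formed by merging pieces through the cut still carries an edge of $M$. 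The parity with which a perfect matching and a $2$-factor meet such a cut is determined by the sides, which is exactly what makes the matching-up feasible; this step is essentially bookkeeping and case analysis, and I expect it to go through as in comparable reductions. This leaves only the cyclically $4$-edge-connected graphs.

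The hard part will be precisely this cyclically $4$-edge-connected core, and more exactly the behaviour for large oddness. Fixing a minimal $2$-factor $F$ with odd cycles $C_{1},\ldots,C_{\omega(G)}$, one must produce a single perfect matching containing an edge of each $C_{i}$. For small oddness this can be achieved by local exchanges along alternating paths and cycles — this is how Theorem \ref{TheoremOddness4} settles oddness at most four — but such exchanges do not visibly scale, since the odd cycles interact globally and an exchange that hits one cycle may release another. Two routes seem worth pushing. The first is the Petersen-colouring variant of Proposition \ref{Proposition Equivalence}: phrasing the required matching as a colouring condition may permit a flow-based or inductive construction. The second is polyhedral: the constant vector $\tfrac13\mathbf{1}$ lies in the perfect matching polytope of every bridgeless cubic graph, so $G$ is a positive convex combination of perfect matchings in which each edge carries weight $\tfrac13$; since each $C_{i}$ has length at least $3$, a weighted-random perfect matching meets $C_{i}$ with positive expectation, and one would try to meet all $C_{i}$ simultaneously. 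The obstacle is that a naive union bound over the ``miss'' events need not stay below $1$ for unbounded oddness, so averaging alone does not close the argument. Turning either route into a construction valid for arbitrary oddness is, I expect, the genuine difficulty, and is exactly where the techniques available here fall short of the full conjecture.
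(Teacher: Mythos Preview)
The statement you are attempting is \emph{Conjecture}~\ref{Conjecture S4 Matchings}: it is open, and the paper does not contain a proof of it. What the paper actually establishes around this conjecture is (i) the reformulation in terms of $S_{4}$-colourings (Proposition~\ref{Proposition Equivalence}), (ii) equivalence with the seemingly stronger Conjectures~\ref{Conjecture S4*} and~\ref{ConjectureS4**} (Theorems~\ref{Theorem Equivalence S4 and S4*} and~\ref{Theorem Equivalence S4 S4**}), and (iii) the partial results for oddness at most four (Theorems~\ref{Theorem S4 Oddness 2} and~\ref{TheoremOddness4}) together with the logarithmic bound of Proposition~\ref{Proposition ni lowerbound}. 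Your proposal is therefore not to be compared against a proof in the paper, because there is none; and you yourself concede in the final paragraph that the argument does not close for arbitrary oddness. In that sense there is no ``gap'' relative to the paper --- the conjecture is genuinely open --- but your write-up should not be presented as a proof.

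Two specific remarks on the plan itself. First, your claim that Theorem~\ref{TheoremOddness4} is settled by ``local exchanges along alternating paths and cycles'' is inaccurate: the alternating-path argument is Proof~1 of Theorem~\ref{Theorem S4 Oddness 2} (oddness two), while Theorem~\ref{TheoremOddness4} is proved precisely by the polyhedral/fractional method you describe as your second route, using Lemma~\ref{LemmaFractionalPM} with the weight vector $w=\tfrac{1}{5}\chi^{N}+\tfrac{2}{5}\chi^{\overline{N}}$. So your ``second route'' is not an alternative to the paper's approach but is exactly what the paper does, and the obstacle you identify (the union bound failing for large oddness) is exactly why the paper stops at $\omega(G)\le 4$ and records only Proposition~\ref{Proposition ni lowerbound} beyond that. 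Second, the reduction to cyclically $4$-edge-connected graphs is more delicate than you suggest: the paper explicitly notes (after Theorem~\ref{Theorem Equivalence S4 S4**}) that cyclically $4$-edge-connectedness of a minimal counterexample is known for Conjecture~\ref{ConjectureS4**} but \emph{not} for the $S_{4}$-Conjecture itself, because gluing $S_{4}$-pairs across a $3$-cut requires control over the frequencies on the cut edges, which is precisely the content of Conjecture~\ref{ConjectureS4**}. Your ``bookkeeping and case analysis'' would in fact need that stronger statement, so this step is not as routine as you indicate.
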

For reasons which shall be obvious in Section \ref{Section S4} we let such a pair of perfect matchings be called an \textit{\mbox{$S_{4}$-pair}} of $G$ and shall refer to Conjecture \ref{Conjecture S4 Matchings} as the \mbox{$S_{4}$-Conjecture}. We will first proceed by showing that this conjecture is implied by Conjecture \ref{ConjectureOddCut}, and so, by what we have said so far, is a consequence of the \mbox{Berge-Fulkerson} Conjecture. In particular, we can see the \mbox{$S_{4}$-Conjecture} as Conjecture \ref{ConjectureOddCut} restricted to \mbox{odd-cuts} $\partial V(C)$, where $C$ is an odd cycle of $G$.

\begin{prop}
Conjecture \ref{ConjectureOddCut} implies the \mbox{$S_{4}$-Conjecture}.
\end{prop}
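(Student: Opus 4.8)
The plan is to show that any pair of perfect matchings witnessing Conjecture~\ref{ConjectureOddCut} is automatically an $S_4$-pair. So let $M_1,M_2$ be perfect matchings of $G$ with $M_1\cap M_2$ containing no odd cut, and set $H:=G\setminus(M_1\cup M_2)$; the goal is to prove that $H$ is bipartite. The first step is purely local: every vertex $v$ of $G$ lies on exactly one edge of $M_1$ and one edge of $M_2$, so in $H$ it has degree $2$ precisely when these two edges coincide (and then the common edge, the ``third'' edge at $v$, lies in $M_1\cap M_2$), and degree $1$ otherwise. Hence $H$ has maximum degree at most $2$, so it is a disjoint union of paths and cycles, and it suffices to rule out odd cycles.

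Suppose then that $H$ contains an odd cycle $C$. Every vertex $v\in V(C)$ has degree $2$ in $H$, so by the local remark its third edge $e_v\in E(G)\setminus E(C)$ lies in $M_1\cap M_2$. Since, by definition, each edge of $\partial V(C)$ is the third edge $e_v$ of the unique endpoint $v$ it has in $V(C)$, this gives $\partial V(C)\subseteq M_1\cap M_2$. Moreover $V(C)\ne V(G)$, because a Hamiltonian cycle in a cubic graph has even length; hence $\partial V(C)$ is a non-empty edge cut, and counting the edges incident with $V(C)$ modulo $2$ (each vertex contributing $3$, each edge of $G[V(C)]$ contributing $2$) shows that $|\partial V(C)|$ is odd, as $|V(C)|$ is odd.

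It remains to locate a genuine odd cut (a minimal one, in the terminology fixed in the paper) inside $\partial V(C)$, and here I will need the following small lemma: in a connected cubic graph, any set of the form $\partial W$ with $|\partial W|$ odd contains a cut $\partial W'$ with $|\partial W'|$ odd. I would prove it by taking the components $V_1,\dots,V_k$ of $G-\partial W$, noting that each $V_i$ lies entirely inside $W$ or entirely inside $V(G)\setminus W$ and that the edges joining distinct $V_i$'s are exactly the edges of $\partial W$; then in any spanning tree $T$ of the resulting connected quotient multigraph some edge must split the $V_i$'s into two classes whose unions $W'$ and $V(G)\setminus W'$ have odd order, since otherwise an easy leaf-peeling induction on $T$ would force every $|V_i|$ to be even, contradicting $\sum_{V_i\subseteq W}|V_i|=|W|$ being odd. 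Each $V_i$ being connected and $T$ restricted to either class being connected, $G[W']$ and $G[V(G)\setminus W']$ are connected, so $\partial W'$ is a minimal cut, clearly contained in $\partial W$ and of odd size. Applying this with $W=V(C)$ yields an odd cut contained in $\partial V(C)\subseteq M_1\cap M_2$, contradicting the choice of $M_1,M_2$. Therefore $H$ has no odd cycle, so it is bipartite and $(M_1,M_2)$ is an $S_4$-pair.

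The only genuine work is the extraction lemma in the third paragraph; everything else is an unwinding of definitions. If one reads Conjecture~\ref{ConjectureOddCut} with ``odd cut'' meaning merely ``$\partial W$ with $|W|$ odd'', even that lemma is unnecessary and the argument already closes after the second paragraph; but with the minimality requirement built into the present notion of ``cut'', this last step is exactly the point to be careful about.
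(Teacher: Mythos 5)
Your proof is correct, and it follows the paper's strategy up to the decisive step: both arguments suppose an odd cycle $C$ survives in $\overline{M_{1}\cup M_{2}}$, observe that $\partial V(C)\subseteq M_{1}\cap M_{2}$ with $\vert\partial V(C)\vert$ odd, and then must extract from $\partial V(C)$ an odd cut in the strict sense fixed in the paper, i.e.\ a \emph{minimal} edge cut of the form $\partial W'$ with $\vert W'\vert$ odd. Where you differ is in how that extraction is performed. The paper exploits the special structure of $\partial V(C)$: every one of its edges has an end in $V(C)$, so the components $C_{1}=C,C_{2},\ldots,C_{k}$ of $\overline{\partial V(C)}$ form a star centred at $C_{1}$, and since $\sum_{j=2}^{k}\vert[C_{1},C_{j}]\vert=\vert\partial V(C)\vert$ is odd, some $[C_{1},C_{j'}]=\partial V(C_{j'})$ has odd size and is automatically a minimal odd cut. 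You instead prove a general extraction lemma --- any $\partial W$ of odd size in a connected cubic graph contains a minimal odd cut --- via a spanning tree of the component quotient and a parity argument on its edges. Your lemma is strictly more general (it never uses that $W$ is the vertex set of a cycle) and is a genuinely reusable fact, but it costs you the spanning-tree/leaf-peeling machinery that the star structure renders unnecessary: in the case at hand the quotient is itself a star, every spanning tree of it is that star, and your tree-edge parity argument collapses to the paper's one-line count. Your closing remark is also well judged: the minimality built into the paper's notion of ``cut'' is precisely the point both proofs must address, and you address it correctly by checking that $G[W']$ and its complement are connected.
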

\begin{proof}
Let $M_{1}$ and $M_{2}$ be two perfect matchings such that their intersection does not contain any \mbox{odd-cut}. Consider $\overline{M_{1}\cup M_{2}}$, and suppose that it contains an odd cycle $C$. Then, all the edges of $\partial V(C)$ belong to $M_{1}\cap M_{2}$. If $\overline{\partial V(C)}$ has exactly two components, then $\partial V(C)$ is an \mbox{odd-cut} belonging to $M_{1}\cap M_{2}$, a contradiction. Therefore, $\overline{\partial V(C)}$ must have more than two components, say $k$, denoted by $C_{1}, C_{2}, \ldots, C_{k}$, where the first component $C_{1}$ is the cycle $C$. Let $[C_{1}, C_{j}]$ denote the set of edges between $C_{1}$ and $C_{j}$, for $j\in \{2,\ldots,k\}$. Since $\sum_{j=2}^{k}\vert [C_{1}, C_{j}]\vert=\vert \partial V(C) \vert\equiv 1\mod 2$, there exists $j'\in \{2,\ldots,k\}$, such that $\vert[C_{1}, C_{j'}]\vert\equiv 1 \mod 2$. However, $[C_{1}, C_{j'}]$ is an \mbox{odd-cut} which belongs to $M_{1}\cap M_{2}$, a contradiction. 
\end{proof}
\section{Statements equivalent to the \mbox{Fan-Raspaud} Conjecture}\label{Section FR}
Let $M_{1}, \ldots, M_{t}$ be a list of perfect matchings of $G$, and let $a\in E(G)$. We denote the number of times $a$ occurs in this list by $\nu_{G}[a:M_{1}, \ldots, M_{t}]$. When it is obvious which list of perfect matchings or which graph we are referring to, we will denote this as $\nu(a)$ and refer to it as the \emph{frequency of $a$}. We will sometimes need to refer to the frequency of an ordered list of edges, say $(a,b,c)$, and we will do this by saying that the frequency of $(a,b,c)$ is $(i,j,k)$, for some integers $i,j$ and $k$. Mkrtchyan et al. \cite{Vahan} showed that the \mbox{Fan-Raspaud} Conjecture, i.e. Conjecture \ref{Conjecture FR}, is equivalent to the following:
\begin{conj}\cite{Vahan}\label{Conjecture FR*}
For each bridgeless cubic graph $G$, any edge $a\in E(G)$ and any $i\in\{0,1,2\}$, there exist three perfect matchings $M_{1}, M_{2},$ and $M_{3}$ such that $M_{1}\cap M_{2}\cap M_{3}=\emptyset$ and $\nu_{G}[a:M_{1}, M_{2}, M_{3}]=i$.
\end{conj}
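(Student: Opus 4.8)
I would prove that Conjecture~\ref{Conjecture FR*} is equivalent to the \mbox{Fan-Raspaud} Conjecture (Conjecture~\ref{Conjecture FR}). One implication is immediate: any three perfect matchings witnessing Conjecture~\ref{Conjecture FR*} for some choice of $a$ and $i$ have empty triple intersection, hence form an \mbox{FR-triple}, so Conjecture~\ref{Conjecture FR*} implies Conjecture~\ref{Conjecture FR}. The content is the converse, so assume the \mbox{Fan-Raspaud} Conjecture; given a bridgeless cubic graph $G$, an edge $a=uv\in E(G)$ and $i\in\{0,1,2\}$, the task is to produce an \mbox{FR-triple} $M_{1},M_{2},M_{3}$ of $G$ with $\nu_{G}[a:M_{1},M_{2},M_{3}]=i$.

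My plan is a reduction by local surgery. For each $i$ I would attach a small ``forcing gadget'' $\Gamma_{i}$ to $G$ at the edge $a$ (replacing $a$, or else attaching $\Gamma_{i}$ along a principal $3$-edge cut at $u$ or at $v$), obtaining a bridgeless cubic graph $G_{i}$; I would then apply the \mbox{Fan-Raspaud} Conjecture to $G_{i}$, take the resulting \mbox{FR-triple} of $G_{i}$, and translate it back to an \mbox{FR-triple} of $G$. The design goal for $\Gamma_{i}$ is that the state of a perfect matching of $G_{i}$ inside $\Gamma_{i}$ dictates how $u$ and $v$ are matched relative to $G-a$, in precisely the pattern that pins the contribution of $a$ to $i$; this is controlled by the small edge cut separating $\Gamma_{i}$ from the rest of $G_{i}$, which a perfect matching crosses an even or odd number of times according to its size. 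Since the gadget leaves $E(G)\setminus\{a\}$ untouched, an edge of $E(G)\setminus\{a\}$ lies in all three translated matchings if and only if it lies in all three matchings of $G_{i}$, and $a$ is removed; hence emptiness of the triple intersection transfers automatically, and the only thing that needs engineering is the frequency of $a$.

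The point I expect to be the main obstacle is that the \mbox{Fan-Raspaud} Conjecture supplies only \emph{some} \mbox{FR-triple} of $G_{i}$, with no control over which one, so the gadget must force the frequency of $a$ \emph{uniformly}: every \mbox{FR-triple} of $G_{i}$, not just a convenient one, must translate to the value $i$. A bare $2$-cut connection of $G$ on $a$ with an auxiliary cubic graph does not do this -- it merely ties the frequency of $a$ to that of the glued edge on the other side without determining either -- so I would need more rigid gadgets, together with a careful, case-by-case verification (tracking parities of cut crossings) that $i$ is the only frequency compatible with an empty triple intersection, and I would have to check that each $G_{i}$ stays simple and bridgeless. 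For whichever of the three values of $i$ resists a clean forcing gadget, I would instead argue directly: start from an \mbox{FR-triple} of $G$ given by the hypothesis, and push $a$ in or out of one of its matchings along a suitable alternating cycle through $a$, choosing the cycle so that the edges it toggles avoid the pairwise intersection of the other two matchings, thereby changing the frequency of $a$ while keeping the triple intersection empty.
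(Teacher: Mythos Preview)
Your proposal has a genuine gap: you never construct the gadgets $\Gamma_i$, and the requirement you correctly identify --- that \emph{every} \mbox{FR-triple} of $G_i$ must yield frequency exactly $i$ at $a$ --- is very strong. Via a $2$-cut connection on $a$ with an auxiliary graph $H$ at an edge $e$, the frequency of $a$ is tied to that of $e$, so you would need a bridgeless cubic graph $H$ possessing an edge $e$ whose frequency is forced to be $i$ in \emph{all} \mbox{FR-triples} of $H$. There is no obvious candidate for this (for instance, in $K_4$ every edge attains each of the values $0,1,2$ in some \mbox{FR-triple}), and you give no construction. The alternating-cycle fallback is likewise only a hope: you would need, from an arbitrary \mbox{FR-triple}, an $M_j$-alternating cycle through $a$ whose toggled non-$M_j$ edges all avoid the intersection of the other two matchings, and you give no argument that such a cycle exists.

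The paper does not itself prove this equivalence --- it cites \cite{Vahan} --- but it explicitly says that the argument there is ``very similar'' to its own proof of Theorem~\ref{Theorem Equivalence S4 and S4*}, and that proof takes a completely different route from yours. Rather than trying to \emph{force} a frequency in a single copy of $G$, one embeds many copies of $G$ (via $2$-cut connections) into a larger bridgeless cubic graph built on the Petersen graph, applies the (assumed) conjecture once to that big graph, and then exploits the rigid intersection pattern of perfect matchings in the Petersen graph: any induced pair (or triple) of perfect matchings of $P$ necessarily realises each possible frequency on some edge of $P$, so some copy of $G$ inherits the desired value of $i$. The point is that one does not need uniform forcing at all --- one only needs that the required frequency occurs \emph{somewhere}, and the Petersen scaffold guarantees this. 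If you want to salvage your approach, you would have to either exhibit explicit forcing gadgets for each $i$ (which seems unlikely to exist) or make the alternating-cycle argument rigorous; the multiple-copies-plus-Petersen idea sidesteps both difficulties.
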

In other words they show that if a graph has an \mbox{FR-triple} then, for every $i$ in $\{0,1,2\}$, there exists an \mbox{FR-triple} in which the frequency of a pre-chosen edge is exactly $i$. In the same paper, Mkrtchyan et al. state the following seemingly stronger version of the \mbox{Fan-Raspaud} Conjecture:
\begin{conj}\cite{Vahan}\label{Conjecture FR**}
Let $G$ be a bridgeless cubic graph, $w$ a vertex of $G$ and $i,j$ and $k$ three integers in $\{0,1,2\}$ such that $i+j+k=3$. Then, $G$ has an \mbox{FR-triple} in which the edges incident to $w$ in a given order have frequencies $(i,j,k)$.
\end{conj}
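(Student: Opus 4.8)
Since an \mbox{FR-triple} realising prescribed frequencies at $w$ is in particular an \mbox{FR-triple}, Conjecture~\ref{Conjecture FR**} trivially implies Conjecture~\ref{Conjecture FR}, so the plan is to prove the converse: assuming Conjecture~\ref{Conjecture FR} (equivalently, by \cite{Vahan}, Conjecture~\ref{Conjecture FR*}), deduce Conjecture~\ref{Conjecture FR**} for a given $G$, vertex $w$, its incident edges $e_1,e_2,e_3$ taken in the prescribed order, and $i,j,k\in\{0,1,2\}$ with $i+j+k=3$. The first step is a parity observation: in every triple of perfect matchings each matching uses exactly one edge at $w$, so the three frequencies at $w$ sum to $3$, while in an \mbox{FR-triple} none of them can be $3$, since an edge of frequency $3$ would lie in the common intersection. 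Hence the only admissible patterns at $w$ are the permutations of $(1,1,1)$ and of $(2,1,0)$, and, since we are free to rename $e_1,e_2,e_3$, it is enough to realise the two ordered patterns $(1,1,1)$ and $(2,1,0)$.

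The second step is to use Conjecture~\ref{Conjecture FR*} to pin down one frequency: applied to $e_3$ with value $0$ it yields an \mbox{FR-triple} in which $\{\nu(e_1),\nu(e_2)\}=\{2,1\}$; applied to $e_1$ with value $1$ it yields one in which $(\nu(e_2),\nu(e_3))\in\{(1,1),(2,0),(0,2)\}$. In the favourable cases the target pattern is already attained, so it remains to treat the unfavourable ones, that is, to pass from an \mbox{FR-triple} $(M_1,M_2,M_3)$ with pattern $(1,2,0)$ at $w$ — say $e_1\in M_1$, $e_2\in M_2\cap M_3$ and $e_3$ in none of them — to one with pattern $(2,1,0)$, and likewise to pass from such a triple to one with pattern $(1,1,1)$. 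For the first of these it would suffice to replace $M_2$ by a perfect matching $M_2'$ that uses $e_1$ at $w$ and is disjoint from the matching $M_1\cap M_3$: then $(M_1,M_2',M_3)$ is again an \mbox{FR-triple}, since its triple intersection is contained in $M_2'\cap(M_1\cap M_3)=\emptyset$, and one checks at once that its pattern at $w$ is $(2,1,0)$. Analogously, to reach $(1,1,1)$ it would suffice to replace $M_3$ by a perfect matching using $e_3$ at $w$ and disjoint from $M_1\cap M_2$.

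The third step is the crux. Set $S:=M_1\cap M_3$ (respectively $S:=M_1\cap M_2$); by emptiness of the original triple intersection, $S$ is disjoint from $M_2$ (respectively from $M_3$), so $M_2$ (respectively $M_3$) is a perfect matching of $G\setminus S$ that uses $e_2$ at $w$. What one must show is that $G\setminus S$ also has a perfect matching using the prescribed edge at $w$ (namely $e_1$, respectively $e_3$). A natural approach is to start from this perfect matching $M_2$ of $G\setminus S$ and switch it along an alternating cycle through the prescribed edge; for this the prescribed edge has to lie in \emph{some} perfect matching of $G\setminus S$, and it is here that the $2$- and $3$-edge-connectivity of $G$ must be exploited, since deleting a matching from a bridgeless cubic graph may create bridges and odd components. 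It may be convenient to reduce first to the case in which $G$ is cyclically $4$-edge-connected, decomposing $G$ along a non-trivial $2$- or $3$-edge cut by means of the $2$-cut and $3$-cut connections and arguing on the smaller factors, or else to pass to an auxiliary bridgeless cubic graph built from $G$ and a small graph that is already known to satisfy Conjecture~\ref{Conjecture FR**} transitively — both $K_4$ and the Petersen graph realise every admissible pattern at every vertex, as a direct inspection of their perfect matchings shows — and then transfer an \mbox{FR-triple} back across the connecting edge-cut. I expect this existence-of-a-suitable-perfect-matching step (equivalently, the cyclically $4$-edge-connected case) to be the main obstacle; once it is settled, realising the two model patterns, and hence by relabelling every admissible pattern at $w$, is routine.
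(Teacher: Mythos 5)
Your opening reduction is sound and matches the paper's first step: the frequencies at $w$ sum to $3$, none can equal $3$ in an \mbox{FR-triple}, so only the ordered patterns $(2,1,0)$ and $(1,1,1)$ need to be realised. You have also correctly located the crux. But the proposal stops exactly there. The step you yourself flag as ``the main obstacle'' --- producing a perfect matching through a prescribed edge at $w$ that is disjoint from $S=M_1\cap M_3$ (respectively $M_1\cap M_2$) --- is not proved, and it is essentially as hard as the statement being proved. Note in particular that $M_1\setminus S$ is \emph{not} a perfect matching of $G\setminus S$ (the endpoints of the edges of $S$ are left uncovered), so you cannot initialise an alternating-cycle switch from a matching of $G\setminus S$ through $e_1$; and a bridgeless cubic graph minus a matching need not have every remaining edge in a perfect matching, so there is no general existence principle to invoke. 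The alternative you mention in passing --- gluing copies of $G$ to a small graph known to behave well and transferring an \mbox{FR-triple} back across the cut --- is indeed the paper's route, but it is precisely where all the work lies, and you have not carried it out: you do not specify the auxiliary graph, the number of copies, the edge whose frequency is prescribed via Conjecture~\ref{Conjecture FR*}, or the case analysis needed to extract a good copy.

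For comparison, the paper realises $(2,1,0)$ by $3$-cut-connecting two copies of $G$ (at $w$) to two vertices of $K_4$ and invoking Conjecture~\ref{Conjecture FR*} to force frequency $2$ on the edge joining the two unused vertices of $K_4$; this forces the shared edge $a_1=a_2$ into $M_1\cap M_2$, and a short analysis of where $M_3$ meets the two principal $3$-edge-cuts yields a copy of $G$ with pattern $(2,1,0)$. It realises $(1,1,1)$ by attaching four copies of $G$ at a maximum independent set of the Petersen graph and forcing frequency $2$ on a suitable edge $e$; the fact that two distinct perfect matchings of the Petersen graph meet in exactly one edge then guarantees a copy with pattern $(1,1,1)$, after a further case analysis when $M_3$ contains a whole principal $3$-edge-cut. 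These constructions and case analyses are the substance of the proof and are missing from your proposal, so as it stands the argument has a genuine gap.
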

This means that we can prescribe the frequencies to the three edges incident to a given vertex. At the end of \cite{Vahan}, the authors remark that it would be interesting to show that Conjecture \ref{Conjecture FR**} is equivalent to the \mbox{Fan-Raspaud} Conjecture. We prove here that this is actually the case.
\begin{theorem}\label{Theorem Main Equivalence FR}
Conjecture \ref{Conjecture FR**} is equivalent to the \mbox{Fan-Raspaud} Conjecture.
\end{theorem}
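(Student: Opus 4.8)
The implication from Conjecture~\ref{Conjecture FR**} to the Fan-Raspaud Conjecture is trivial, since an FR-triple realising a prescribed sequence of frequencies at a vertex is in particular an FR-triple. For the converse I would assume the Fan-Raspaud Conjecture, hence also Conjecture~\ref{Conjecture FR*} by the equivalence established in \cite{Vahan}, and fix a bridgeless cubic graph $G$, a vertex $w$ with incident edges $e_1,e_2,e_3$, and a target sequence $(i,j,k)$ with $i,j,k\in\{0,1,2\}$ and $i+j+k=3$. In any FR-triple each of the three perfect matchings uses exactly one edge at $w$, so $\nu(e_1)+\nu(e_2)+\nu(e_3)=3$ automatically; the admissible targets are $(1,1,1)$ and the permutations of $(2,1,0)$, each of which has at least one coordinate equal to $1$. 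It therefore suffices to realise, in an FR-triple of $G$, one prescribed $1$ among the three frequencies together with one further prescribed value. Conjecture~\ref{Conjecture FR*} already lets me prescribe the frequency of one edge, so the whole difficulty is to force a second frequency without using up that freedom.

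The tool for this would be a $2$-cut connection with a ``rigid'' gadget. Suppose $H$ is a bridgeless cubic graph with an edge $f$ such that $\nu_H[f:N_1,N_2,N_3]=1$ for \emph{every} FR-triple $(N_1,N_2,N_3)$ of $H$, and form $G'=G(e_2)*H(f)$, which is again simple, connected, bridgeless and cubic. The point is that an FR-triple $(M'_1,M'_2,M'_3)$ of $G'$ meets the new $2$-edge-cut rigidly: by a parity count each $M'_t$ uses $0$ or $2$ of its edges, and not all three $M'_t$ can use both of them, for otherwise the two cut-edges would lie in $M'_1\cap M'_2\cap M'_3=\emptyset$. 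Letting $c$ be the number of members crossing the cut, the restrictions to the two sides form an FR-triple of $G$ with $\nu_G(e_2)=c$ and an FR-triple of $H$ with $\nu_H(f)=c$; hence $c=1$ by the choice of $H$, so $\nu_G(e_2)=1$ is forced in every FR-triple of $G$ obtained this way. Applying Conjecture~\ref{Conjecture FR*} to $G'$ along $e_1$ (which the connection leaves untouched) with the value that the target prescribes there yields an FR-triple of $G'$ whose restriction is an FR-triple of $G$ with the prescribed value of $\nu_G(e_1)$ and with $\nu_G(e_2)=1$; then $\nu_G(e_3)$ is determined by the sum being $3$, and after relabelling the edges at $w$ so that the coordinate equal to $1$ sits on $e_2$ this produces every admissible target. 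Here one still has to verify the easy points that the restriction is genuinely an FR-triple and that frequencies of edges lying on the $G$-side are preserved under restriction; a variant of the same argument attaches the gadget at $w$ via a $3$-cut connection, the only additional precaution being to pick the gadget so that no perfect matching of $G'$ uses all three edges of the principal $3$-edge-cut.

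The genuine obstacle is the construction of such a rigid gadget: a bridgeless cubic graph $H$ with an edge $f$ whose frequency is $1$ in \emph{every} FR-triple of $H$ --- equivalently, one for which no three perfect matchings, each avoiding $f$, have empty common intersection, while no FR-triple uses $f$ twice. I expect this to be the hard and technical part of the proof: bridgeless cubic graphs carry very many perfect matchings and correspondingly many, flexible FR-triples, so rigidity of a single edge's frequency is a strong demand, and $H$ should presumably be sought among small graphs with a tightly controlled cut structure, for which the property can be checked by a finite case analysis. Given such an $H$, the Fan-Raspaud Conjecture implies Conjecture~\ref{Conjecture FR**} by the cut-and-paste bookkeeping above, which yields the equivalence.
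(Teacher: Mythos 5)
The trivial direction and the reduction to realising $(2,1,0)$ and $(1,1,1)$ are fine, and your parity bookkeeping across the $2$-cut is correct as far as it goes. The fatal problem is the ``rigid gadget'' itself: under the very hypothesis you are assuming, it cannot exist. You assume the Fan--Raspaud Conjecture, hence (by the equivalence from \cite{Vahan} that you invoke) Conjecture~\ref{Conjecture FR*} holds for \emph{every} bridgeless cubic graph and \emph{every} edge. Applying Conjecture~\ref{Conjecture FR*} to your gadget $H$ and the edge $f$ with $i=0$ produces an FR-triple of $H$ in which $f$ has frequency $0$, directly contradicting the requirement that $\nu_H[f]=1$ in every FR-triple of $H$. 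So no bridgeless cubic graph has an edge whose frequency is rigidly $1$, and the entire forcing mechanism collapses; this is not a ``hard and technical part'' to be filled in later, but an impossibility.

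The paper's proof sidesteps exactly this obstruction by never asking for rigidity inside a fixed auxiliary graph. Instead it attaches copies of $G$ (via $3$-cut connections at the vertex $w$) to $K_4$ and to the Petersen graph, and then applies Conjecture~\ref{Conjecture FR*} to the \emph{composite} graph to prescribe frequency $2$ on one well-chosen edge of the frame. The structure of the frame then does the forcing: in the $K_4$ construction, frequency $2$ on the opposite edge $u_3u_4$ forces frequency $2$ on the shared edge $a_1=a_2$ and a short case analysis on the third matching yields $(2,1,0)$ in one of the two copies of $G$; in the Petersen construction, the fact that any two distinct perfect matchings of $P$ meet in exactly one edge guarantees that some copy of $G$ receives $(1,1,1)$. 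If you want to salvage your approach, you would have to replace ``rigid for all FR-triples'' by ``rigid for the particular FR-triple produced by an application of Conjecture~\ref{Conjecture FR*} to a larger graph,'' which is essentially what the paper's constructions achieve.
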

\begin{proof}
Since the \mbox{Fan-Raspaud} Conjecture is equivalent to Conjecture \ref{Conjecture FR*}, it suffices to show the equivalence of Conjectures \ref{Conjecture FR*} and \ref{Conjecture FR**}. The latter clearly implies the former, so assume Conjecture \ref{Conjecture FR*} is true and let $a,b$ and $c$ be the edges incident to $w$ such that the frequencies $(i,j,k)$ are to be assigned to $(a,b,c)$. It is sufficient to show that there exist two \mbox{FR-triples} in which the frequencies of $(a,b,c)$ are $(2,1,0)$ in one \mbox{FR-triple} (Case 1 below) and $(1,1,1)$ in the other \mbox{FR-triple} (Case 2 below).
\begin{figure}[h]
      \centering
      \includegraphics[width=0.45\textwidth]{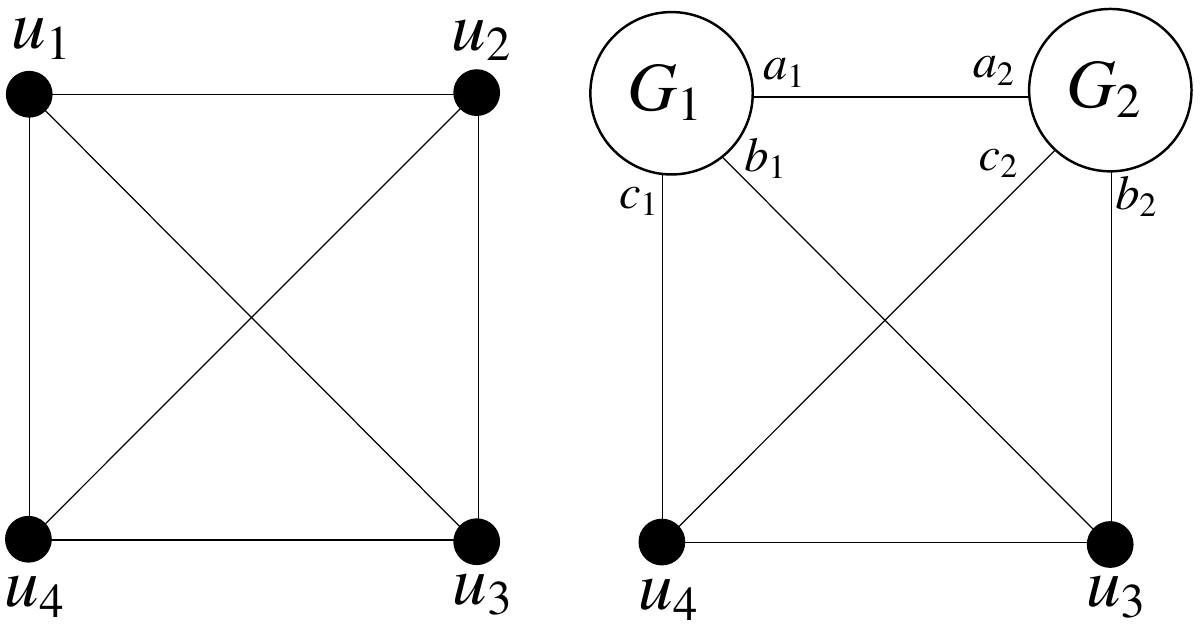}
	  \caption{The graphs $K_4$ and $K^*_4$ in Case 1 of the proof of Theorem \ref{Theorem Main Equivalence FR}.}
      \label{Figure FR210}
\end{figure}

\noindent \textbf{Case 1.} Let $u_1,u_2,u_3$ and $u_4$ be the vertices of the complete graph $K_{4}$ as in Figure \ref{Figure FR210}. Consider two copies of $G$, and let the vertex $w$ in the $i^{th}$ copy of $G$ be denoted by $w_{i}$, for each $i\in\{1,2\}$. We apply a $3$-cut connection between $u_i$ and $w_i$, for each $i\in\{1,2\}$. With reference to this resulting graph, denoted by $K^*_4$, we refer to the copy of the graph $G-w$ at $u_1$ as $G_1$, and to the corresponding edges $a,b$ and $c$ as $a_1, b_1$ and $c_1$, respectively. The graph $G_2$ and the edges $a_2, b_2$ and $c_2$ are defined in a similar way, and the $3$-cut connection is done in such a way that $b_{1}$ and $b_{2}$ are adjacent, and also $c_{1}$ and $c_{2}$, as Figure \ref{Figure FR210} shows. Note also that $a_1$ and $a_2$ coincide in $K^*_4$. By our assumption, there exists an \mbox{FR-triple} $M_{1}, M_{2}$ and $M_{3}$ of $K^*_4$ in which the edge $u_3u_4$ has frequency $2$. Without loss of generality, let $u_3u_4 \in M_{1}\cap M_{2}$. Then, $a_1$ (and so $a_2$) must belong to $M_{1}\cap M_{2}$. Clearly, $a_1$ (and so $a_2$) cannot belong to $M_{3}$, and so the principal 3-edge-cuts with respect to $G_1$ and $G_2$ do not belong to $M_{3}$. If $b_1\in M_{3}$, then we are done, as then $M_{1}, M_{2}$ and $M_{3}$ restricted to $G_1$, together with $a$ and $b$ having the same frequencies as $a_1$ and $b_1$, induce an \mbox{FR-triple} of $G$ such that the frequencies of $(a,b,c)$ are $(2,1,0)$. So suppose $c_1\in M_{3}$. Then, $b_2\in M_{3}$, and so by a similar argument applied to $G_2$ and the corresponding edges, $M_{1}, M_{2}$ and $M_{3}$ induce an \mbox{FR-triple} in $G$ such that the frequencies of $(a,b,c)$ are $(2,1,0)$.
\begin{figure}[h]
      \centering
      \includegraphics[width=0.8\textwidth]{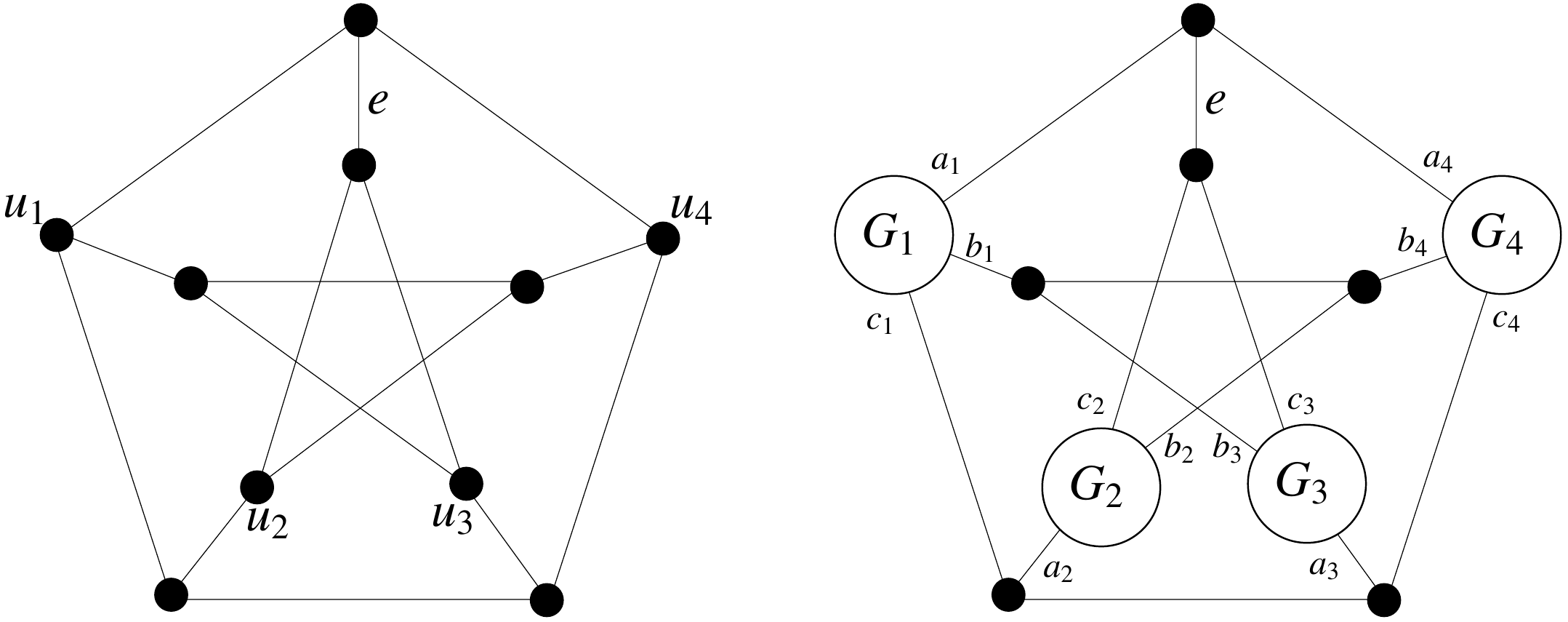}
	  \caption{The graphs $P$ and $P^*$ in Case 2 of the proof of Theorem \ref{Theorem Main Equivalence FR}.}
      \label{Figure FR111}
\end{figure}

\noindent \textbf{Case 2.} Let $P$ be the Petersen graph and $\{u_1,u_2,u_3,u_4\}$ be a maximum independent set of vertices in $P$ as in Figure \ref{Figure FR111}. Consider four copies of $G$. Let the vertex $w$ in the $i^{th}$ copy of $G$ be denoted by $w_{i}$, for each $i\in\{1,\ldots,4\}$. Let $P^{*}$ be the graph obtained by applying a $3$-cut connection between each $u_i$ and $w_{i}$, as shown in Figure \ref{Figure FR111}. Similar to Case 1 we refer to the copy of $G-w$ at $u_i$ as $G_i$ and to the corresponding edges $a,b$ and $c$ as $a_i, b_i$ and $c_i$, respectively. Since we are assuming that Conjecture \ref{Conjecture FR*} is true, we can consider an \mbox{FR-triple} $M_{1}, M_{2}$ and $M_{3}$ of $P^*$ in which the edge $e$ incident to both $a_1$ and $a_4$ has frequency $2$. Without loss of generality, let the two perfect matchings containing $e$ be $M_{1}$ and $M_{2}$. The edges $a_1, c_2, c_3$ and $a_4$ are not contained in $M_{1}$ and neither $M_{2}$, since they are all incident to $e$, and so no principal $3$-edge-cut leaving $G_i$ belongs to $M_1$ or $M_2$. Then, $M_1$ and $M_2$ induce perfect matchings of $P$ (clearly distinct), and since there are exactly two perfect matchings of $P$ containing $e$, we can assume that $M_{1}$ contains $\{e,b_1,a_2,a_3,b_4\}$, and $M_{2}$ contains $\{e,c_1,b_2,b_3,c_4\}$.

If the third perfect matching $M_3$ induces a perfect matching of the Petersen graph then the induced perfect matching cannot be one of the perfect matchings induced by $M_1$ and $M_2$ in $P$. Hence, since every two distinct perfect matchings of $P$ intersect in exactly one edge of $P$, there exists $i\in\{1,2,3,4\}$ such that the frequencies of $(a_i,b_i,c_i)$ are $(1,1,1)$ and so, $M_{1}, M_{2}$ and $M_{3}$ restricted to $G_{i}$, together with $a,b$ and $c$ having the same frequencies as $a_i,b_i$ and $c_i$, induce an \mbox{FR-triple} in $G$ with the needed property.

Therefore, suppose $M_{3}$ contains the principal 3-edge-cut of one of the $G_i$s, say $G_1$ by symmetry of $P^{*}$. Thus, $a_1,b_1$ and $c_1$ belong to $M_3$. 
The perfect matching $M_{3}$ can intersect the principal 3-edge-cut at $G_{2}$ either in $b_{2}$ or $c_{2}$ (not both). If $c_{2}\in M_{3}$ we are done by the same reasoning above applied to $G_{2}$ and the corresponding edges. So suppose $b_{2}\in M_{2}\cap M_{3}$. Then, $c_{4}\in M_{3}$, and $M_{3}$ can only intersect the principal 3-edge-cut at $G_{3}$ in $c_{3}$, implying that the frequencies of $(a_3,b_3,c_3)$ are $(1,1,1)$ in $P^*$ and that $M_{1}, M_{2}$ and $M_{3}$ restricted to $G_{3}$, together with $a,b$ and $c$ having the same frequencies as $a_3,b_3$ and $c_3$, induce an \mbox{FR-triple} in $G$ with the needed property.
\end{proof}

In \cite{Vahan} it is also shown that a minimal counterexample to Conjecture \ref{Conjecture FR**} is cyclically 4-edge-connected. It remains unknown whether a smallest counterexample to the original formulation of the \mbox{Fan-Raspaud} Conjecture has the same property. Indeed, we only prove that the two assertions are equivalent, but we cannot say whether a possible counterexample to Conjecture \ref{Conjecture FR**} is itself a counterexample to the original formulation. 

\section{Statements equivalent to the \mbox{\boldmath{$S_{4}$}-Conjecture}}\label{Section S4}
All conjectures presented in Section \ref{Section List Conjectures} are implied by a conjecture made by Jaeger in the late 1980s. In order to state it we need the following definitions. Let $G$ and $H$ be two graphs. An \emph{$H$-colouring} of $G$ is a proper edge-colouring $f$ of $G$ with edges of $H$, such that for each vertex $u\in V(G)$, there exists a vertex $v \in V(H)$ with $f(\partial_{G}\{u\})\subseteq\partial_{H}\{v\}$. If $G$ admits an $H$-colouring, then we will write $H\prec G$. In this paper we consider \mbox{$S_{4}$-colourings} of bridgeless cubic graphs, where $S_{4}$ is the multigraph shown in Figure \ref{FigureS4}. %Since $S_{4}$ is not cubic we remark that in an \mbox{$S_{4}$-colouring} of a bridgeless cubic graph $G$, the above definition of $H$-colourings applies, but for all vertices $u\in V(G)$, $\partial_{G}\{u\}$ is not mapped to $\partial_{S_{4}}\{z\}$, where $z$ is the vertex of degree 1 in $S_{4}$ (see Figure \ref{FigureS4}).
\begin{figure}[h]
      \centering
      \includegraphics[width=0.35\textwidth]{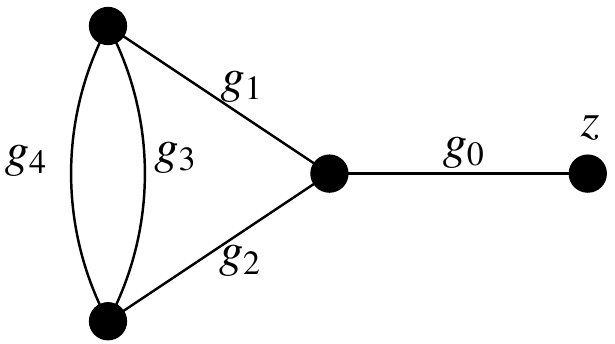}
      \caption{The multigraph $S_{4}$.}
      \label{FigureS4}
\end{figure}

The importance of $H$-colourings is mainly due to Jaeger's Conjecture \cite{Jaeger} which states that each bridgeless cubic graph $G$ admits a $P$-colouring (where $P$ is again the Petersen graph). For recent results on $P$-colourings, known as Petersen-colourings, see for instance \cite{HakMkr,VahanPetersen,Samal}. The following proposition shows why we choose to refer to a pair of perfect matchings whose deletion leaves a bipartite subgraph as an \mbox{$S_{4}$-pair}.
\begin{prop} \label{Proposition Equivalence}
Let $G$ be a bridgeless cubic graph, then $S_{4}\prec G$ if and only if $G$ has an \mbox{$S_{4}$-pair}.
\end{prop}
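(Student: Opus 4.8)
The plan is to build an explicit dictionary between $S_{4}$-colourings of $G$ and $S_{4}$-pairs, using two structural features of the multigraph $S_{4}$ drawn in Figure \ref{FigureS4}: it has exactly two perfect matchings, say $N_{1}$ and $N_{2}$, each consisting of the pendant edge together with one of the two parallel edges; and the subgraph of $S_{4}$ obtained by deleting the edges of $N_{1}\cup N_{2}$ (namely the two remaining edges) is a path, hence bipartite. The feature that makes $N_{1},N_{2}$ behave well under pull-back is that the edge-star $\partial_{S_{4}}\{v\}$ of each degree-$3$ vertex $v$ of $S_{4}$ meets each of $N_{1}$ and $N_{2}$ in exactly one edge, while the pendant vertex, having degree $1$, can never be the image under the $S_4$-colouring of a vertex of $G$.

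For the forward direction, suppose $f$ is an $S_{4}$-colouring of $G$ and set $M_{i}:=f^{-1}(N_{i})$ for $i\in\{1,2\}$. I would first check that each $M_{i}$ is a perfect matching of $G$: for every $u\in V(G)$ we have $f(\partial_{G}\{u\})\subseteq\partial_{S_{4}}\{v\}$ for some vertex $v$ of $S_{4}$, which by the remark above is one of the three degree-$3$ vertices; since $f$ is proper, the three edges at $u$ carry three distinct colours, so exactly one edge at $u$ lies in $f^{-1}(N_{i})$. Next, $\overline{M_{1}\cup M_{2}}=f^{-1}\!\big(E(S_{4})\setminus(N_{1}\cup N_{2})\big)$ is the union of the two colour classes associated with the two edges of the deleted path; each such class is a matching by properness, so this subgraph has maximum degree at most $2$, and every cycle in it alternates between the two colours and is therefore even. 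Hence $\overline{M_{1}\cup M_{2}}$ is bipartite and $\{M_{1},M_{2}\}$ is an $S_{4}$-pair.

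For the converse, let $\{M_{1},M_{2}\}$ be an $S_{4}$-pair. The subgraph $\overline{M_{1}\cup M_{2}}$ has maximum degree at most $2$ (a vertex is missed by $M_{1}\cup M_{2}$ on at most two of its three incident edges, the extreme case occurring precisely when its $M_{1}$-edge and $M_{2}$-edge coincide) and is bipartite, hence it is a disjoint union of paths and even cycles and so admits a proper $2$-edge-colouring; likewise $M_{1}\triangle M_{2}$ is a disjoint union of even cycles, properly $2$-coloured by the partition into $M_{1}\setminus M_{2}$ and $M_{2}\setminus M_{1}$. I would then colour $E(G)$ by the five edges of $S_{4}$: the edges of $M_{1}\cap M_{2}$ get the pendant edge; $M_{1}\setminus M_{2}$ and $M_{2}\setminus M_{1}$ get the two parallel edges; and the two colour classes of $\overline{M_{1}\cup M_{2}}$ get the two path-edges. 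The verification splits into the two local pictures at a vertex $u$: if the $M_{1}$- and $M_{2}$-edge at $u$ coincide, the three colours at $u$ are the pendant edge and the two path-edges, i.e. the star of the vertex of $S_4$ incident to the pendant edge; if they are distinct, the colours at $u$ are the two parallel edges and one path-edge, i.e. the star of an endpoint of the parallel pair. In both cases the colouring is proper at $u$ and satisfies the $S_{4}$-colouring condition, so $S_{4}\prec G$.

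The routine but slightly delicate points are (i) confirming that the colour assigned to an edge of $G$ belongs to the star of \emph{both} of its endpoints, which is immediate once the above case analysis is in place, and (ii) the single use of bipartiteness: it enters only to guarantee that $\overline{M_{1}\cup M_{2}}$, which always has maximum degree at most $2$, can be split into two matchings so that the two such edges at a ``coinciding'' vertex receive distinct colours. I do not expect a genuine obstacle here; the content of the proposition is precisely that the combinatorial constraints defining an $S_{4}$-pair are exactly those encoded by the local structure of the multigraph $S_{4}$.
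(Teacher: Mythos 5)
Your proof is correct and follows essentially the same route as the paper: identify $M_{1}$ and $M_{2}$ with the preimages of the two perfect matchings $\{g_{0},g_{3}\}$ and $\{g_{0},g_{4}\}$ of $S_{4}$ in one direction, and in the other direction colour $M_{1}\cap M_{2}$ with the pendant edge, $M_{1}\triangle M_{2}$ with the parallel edges, and a proper $2$-edge-colouring of $\overline{M_{1}\cup M_{2}}$ with the two path edges. The only cosmetic difference is that you assign the parallel edges globally via the partition $M_{1}\setminus M_{2}$, $M_{2}\setminus M_{1}$ rather than by properly colouring each even cycle of $M_{1}\cup M_{2}$, which amounts to the same thing.
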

\begin{proof}
Along the entire proof we denote the edges of $S_4$ by using the same labelling as in Figure \ref{FigureS4}.
Let $M_{1}$ and $M_{2}$ be an \mbox{$S_{4}$-pair} of $G$. The graph induced by $M_{1}\cup M_{2}$, denoted by $G[M_{1}\cup M_{2}]$, is made up of even cycles and isolated edges, whilst the bipartite graph $\overline{M_{1}\cup M_{2}}$ is made up of even cycles and paths.  
We obtain an \mbox{$S_{4}$-colouring} of $G$ as follows:
\begin{itemize}
\item the isolated edges in $M_{1}\cup M_{2}$ are given colour $g_{0}$,
\item the edges of the even cycles in $M_{1}\cup M_{2}$ are properly edge-coloured with $g_{3}$ and $g_{4}$, and
\item the edges of the paths and even cycles in $\overline{M_{1}\cup M_{2}}$ are properly edge-coloured with $g_{1}$ and $g_{2}$.
\end{itemize}
One can clearly see that this gives an \mbox{$S_{4}$-colouring} of $G$. Conversely, assume that $S_{4}\prec G$. We are required to show that there exists an \mbox{$S_{4}$-pair} of $G$. Let $M_{1}$ be the set of edges of $G$ coloured $g_{3}$ and $g_{0}$, and let $M_{2}$ be the set of edges of $G$ coloured $g_{4}$ and $g_{0}$. If $e$ and $f$ are edges of $G$ coloured $g_{3}$ (or $g_{4}$) and $g_{0}$, respectively, then $e$ and $f$ cannot be adjacent, otherwise we contradict the \mbox{$S_{4}$-colouring} of $G$. Thus, $M_{1}$ and $M_{2}$ are matchings. Next, we show that they are in fact perfect matchings. This follows since for every vertex $v$ of $G$, $f(\partial_{G}\{v\})$ is equal to $\{g_{1}, g_{3}, g_{4}\}$, or $\{g_{2}, g_{3}, g_{4}\}$, or $\{g_{0}, g_{1}, g_{2}\}$. Thus, $\overline{M_{1}\cup M_{2}}$ is the graph induced by the edges coloured $g_{1}$ and $g_{2}$, which clearly cannot induce an odd cycle.
\end{proof}
%\textbf{In the sequel, when we try to find an $S_{4}$-colouring of a bridgeless cubic graph $G$, we will be trying to find an $S_{4}$-pair of $G$. (Tu hai scritto dire meglio)}
Hence, by the previous proof, Conjecture \ref{Conjecture S4 Matchings} can be stated in terms of \mbox{$S_{4}$-colourings}, which clearly shows why we choose to refer to it as the \mbox{$S_{4}$-Conjecture}. 
In analogy to what we did for \mbox{FR-triples}, here we prove that for \mbox{$S_{4}$-pairs} we can prescribe the frequency of an edge and the frequencies of the edges leaving a vertex (the proof of the latter also implies that we can prescribe the frequencies of the edges of each $3$-cut). 
Consider the following conjecture, analogous to Conjecture \ref{Conjecture FR*}:
\begin{conj}\label{Conjecture S4*}
For any bridgeless cubic graph $G$, any edge $a \in E(G)$ and any $i\in\{0,1,2\}$, there exists an \mbox{$S_{4}$-pair}, say $M_{1}$ and $M_{2}$, such that $\nu_{G}[a:M_{1},M_{2}]=i$.
\end{conj}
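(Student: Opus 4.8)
The plan is to mimic the structure used for FR-triples but in the simpler $S_4$-world, showing that the bare $S_4$-Conjecture (Conjecture \ref{Conjecture S4 Matchings}) already implies its frequency-prescribed strengthening (Conjecture \ref{Conjecture S4*}). Since $\nu_G[a:M_1,M_2]\in\{0,1,2\}$ and the case $i=1$ can always be obtained by simply picking an $S_4$-pair and, if needed, performing the swap below on an edge incident to $a$, it suffices to produce, for a pre-chosen edge $a=u_1u_2$, one $S_4$-pair with $\nu(a)=2$ and one with $\nu(a)=0$. The key observation is the local flexibility of $S_4$-colourings: at a vertex $v$ whose incident edges are coloured $\{g_3,g_4,g_1\}$ (so frequencies $(1,1,0)$ for the corresponding edges) or $\{g_0,g_1,g_2\}$ (frequencies $(1,0,0)$ up to relabelling), one can often recolour a short subgraph to shuffle which of the three edges carries $g_0$ versus $g_1/g_2$, hence to move the "frequency mass" around — exactly as Proposition \ref{Proposition Equivalence} lets us pass between $S_4$-pairs and $S_4$-colourings.

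First I would handle $i=2$. Take a single auxiliary vertex gadget: form $G'$ by doing a $3$-cut connection between $u_1$ (an endpoint of $a$) — more precisely, subdivide $a$ or attach a small bridgeless cubic gadget so that $a$ becomes the "central" edge $e$ of a triangle-like configuration, analogous to the edge $u_3u_4$ of $K_4$ in Case 1 of Theorem \ref{Theorem Main Equivalence FR}, or simply replace each endpoint of $a$ by a copy of $K_4$ minus a vertex. Apply Conjecture \ref{Conjecture S4 Matchings} to $G'$ to get an $S_4$-pair $N_1,N_2$; the bipartiteness of $\overline{N_1\cup N_2}$ forces the edges of the attached gadget into a configuration in which the (images of the) edge $a$ lies in $N_1\cap N_2$, because an odd gadget cycle would otherwise survive in the complement. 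Restricting $N_1,N_2$ to the copy of $G$ then yields an $S_4$-pair of $G$ with $\nu(a)=2$ — here one must check that the restriction is still a pair of perfect matchings whose complement is bipartite, which follows since the gadget contributes no odd cycle and the $3$-edge-cut behaves correctly.

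For $i=0$ I would use the complementary gadget: attach something (again a $K_4$-type gadget, or route $a$ through a Petersen-graph copy as in Case 2) forcing $a$ to receive colour $g_1$ or $g_2$, i.e.\ to lie in neither $M_1$ nor $M_2$, in every $S_4$-colouring of the enlarged graph; restriction then gives $\nu(a)=0$. Alternatively, and perhaps cleaner, I would argue directly on colourings: start from any $S_4$-colouring of $G$ (which exists by the $S_4$-Conjecture via Proposition \ref{Proposition Equivalence}), look at the colour of $a$, and if it is $g_0,g_3$ or $g_4$ perform a Kempe-type recolouring along the component of $G[M_1\cup M_2]$ (an even cycle or isolated edge through $a$) or along a $g_1g_2$-path to change $a$'s colour class, verifying at each step that the $S_4$-colouring property is preserved at the (at most two) affected vertices. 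The main obstacle I anticipate is exactly this last verification: recolouring near $a$ can violate the condition $f(\partial_G\{u\})\subseteq\partial_H\{v\}$ at a neighbouring vertex unless the swap is done along a carefully chosen alternating subgraph, so the real work is identifying the right gadget (or the right alternating structure) that localises the change and leaves the rest of the $S_4$-pair — in particular the bipartiteness of the complement — intact. I expect the gadget approach, paralleling Cases 1 and 2 of Theorem \ref{Theorem Main Equivalence FR}, to be the safest route, with the bookkeeping on the principal $3$-edge-cuts being the only delicate point.
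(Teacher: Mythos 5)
Your overall plan (derive the frequency-prescribed version from the bare $S_4$-Conjecture by attaching gadgets and restricting, in parallel with Theorem \ref{Theorem Main Equivalence FR}) is the right one, but the mechanisms you propose do not work as stated, and the gaps sit exactly where the content of the proof lies. The central claim for $i=2$ --- that for a triangle- or $K_4$-type gadget at the ends of $a$ ``the bipartiteness of $\overline{N_1\cup N_2}$ forces $a$ into $N_1\cap N_2$'' --- is false. If you expand an endpoint of $a$ into a triangle $T$ with vertices $u^a,u^b,u^c$ (where $a$ is the external edge at $u^a$), a perfect matching may meet $T$ in the edge $u^au^b$, in which case it covers $u^a$ internally and \emph{avoids} $a$; bipartiteness of the complement only requires that some edge of $T$ lie in $N_1\cup N_2$, and never pins down which one. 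The same objection applies to the $K_4$-gadget: in Case 1 of Theorem \ref{Theorem Main Equivalence FR} the frequency $2$ on $u_3u_4$ is not forced by the gadget but \emph{prescribed} by invoking Conjecture \ref{Conjecture FR*}, which is precisely the single-edge statement you are trying to prove here; without it you have nothing to bootstrap from. Likewise $i=1$ is not free: an arbitrary $S_4$-pair of $G$ may give $\nu(a)\in\{0,2\}$, and the ``swap'' you invoke is the Kempe-type recolouring whose soundness you yourself flag as unverified --- such swaps generically destroy the condition $f(\partial_G\{u\})\subseteq\partial_{S_4}\{v\}$ at the neighbours of $a$, and no alternating structure that repairs this is exhibited.

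The paper's proof replaces ``forcing in every $S_4$-pair'' by ``guaranteeing in some copy''. It builds a \emph{single} auxiliary graph $H$: every edge $e_i$ of the Petersen graph is replaced, via $2$-cut connections through a path $f_1f_2f_3$ of $K_4$, by a gadget containing two copies of $G-a$, arranged so that in any perfect matching of $H$ the copy $a_1$ has the same frequency as $e_i$, while the copy $a_2$ has frequency $0$ whenever $e_i$ has frequency $2$ (this uses the fact that the unique perfect matching of $K_4$ through $f_3$ contains $f_1$ and misses $f_2$). One application of the $S_4$-Conjecture to $H$ yields an $S_4$-pair inducing two \emph{distinct} perfect matchings of $P$; since distinct perfect matchings of $P$ meet in exactly one edge, some $e_j$ has frequency $1$ and some $e_k$ has frequency $2$, and restricting to the appropriate copies of $G-a$ realises $i=1$, $i=2$ and (via the second copy on $e_k$, together with a parity argument showing that an odd cycle of frequency $0$ through $a$ in $G$ would lift to an odd cycle of length $\alpha+4$ in $\overline{M_1\cup M_2}$ in $H$) $i=0$. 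These two ingredients --- the thirty-copies-at-once construction and the intersection property of perfect matchings of the Petersen graph --- are what your sketch is missing, and the bookkeeping you defer (distinctness of the induced matchings, bipartiteness after restriction) is carried by them.
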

In Theorem \ref{Theorem Equivalence S4 and S4*} we show that the latter conjecture is actually equivalent to the \break \mbox{$S_{4}$-Conjecture}. The proof given in \cite{Vahan} to show the equivalence of the \mbox{Fan-Raspaud} Conjecture and Conjecture \ref{Conjecture FR*} is very similar to the proof we give here for the analogous case for the \mbox{$S_{4}$-Conjecture}, however, we need a slightly more complicated tool in our context.
\begin{theorem}\label{Theorem Equivalence S4 and S4*}
Conjecture \ref{Conjecture S4*} is equivalent to the \mbox{$S_{4}$-Conjecture}.
\end{theorem}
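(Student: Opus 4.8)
The plan is to mimic the structure of the proof of Theorem~\ref{Theorem Main Equivalence FR}, but with pairs of perfect matchings in place of \mbox{FR-triples} and with the condition ``$\overline{M_1\cup M_2}$ is bipartite'' in place of ``$M_1\cap M_2\cap M_3=\emptyset$''. One direction is trivial: Conjecture~\ref{Conjecture S4*} obviously implies the \mbox{$S_4$-Conjecture} by forgetting the prescription on $a$. So assume the \mbox{$S_4$-Conjecture} holds; given $G$, an edge $a\in E(G)$ and a target frequency $i\in\{0,1,2\}$, I want to produce an \mbox{$S_4$-pair} $M_1,M_2$ of $G$ with $\nu_G[a:M_1,M_2]=i$. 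The key observation is that if $M_1,M_2$ is an \mbox{$S_4$-pair} then every edge has frequency $0$, $1$ or $2$, and the frequencies $0$ and $2$ are ``dual'' in the sense used in Proposition~\ref{Proposition Equivalence}: swapping the roles of the colour classes does not directly help, but the real point is that it suffices to realise one of the two ``extreme'' prescriptions and one ``middle'' prescription, and then the third follows.

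The concrete construction: build an auxiliary bridgeless cubic graph by taking a suitable small gadget $J$ with a distinguished edge or vertex, glue two (or a few) copies of $G$ to it by $3$-cut connections at the vertex $w$ incident to $a$ (after subdividing, or rather choosing $w$ so that $a$ is one of the three edges at $w$), apply the \mbox{$S_4$-Conjecture} to the auxiliary graph to obtain an \mbox{$S_4$-pair} there, and then show by a parity/cut argument that the restriction of this \mbox{$S_4$-pair} to (at least) one of the copies of $G$ is an \mbox{$S_4$-pair} of $G$ in which $a$ has the desired frequency. The phrase in the excerpt ``we need a slightly more complicated tool in our context'' suggests that, unlike the \mbox{FR} case where $K_4$ and the Petersen graph suffice, here one needs to be careful that the glued-in gadget does not create an odd cycle in the complement; so the gadget must itself admit \mbox{$S_4$-pairs} with controlled behaviour on its boundary edges, and one should check that an \mbox{$S_4$-pair} restricted across a principal $3$-edge-cut behaves well. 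The crucial lemma to isolate is: if $M_1,M_2$ is an \mbox{$S_4$-pair} of $G_1(v_1)*G_2(v_2)$ and the principal $3$-edge-cut has frequency vector $(f(x),f(y),f(z))$ on its three edges, then on each side the natural ``reconstructed'' pair (add back the deleted vertex, colouring its three edges according to $(f(x),f(y),f(z))$) is again an \mbox{$S_4$-pair}, provided the frequency vector is one of the admissible patterns — precisely the patterns $\{0,1,2\}$-valued vectors that a single vertex can see, which by the colouring correspondence are $(1,1,0)$ up to permutation, $(2,1,0)$ is impossible for a single vertex... so in fact the admissible boundary patterns are exactly $(1,1,0)$, $(2,0,0)$ and permutations, together with $(1,1,1)$ and $(2,1,0)$? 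This needs to be pinned down, and that bookkeeping is the technical heart.

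In terms of order of operations I would: (1) state and prove the restriction/gluing lemma describing which frequency patterns on a principal $3$-edge-cut are compatible with an \mbox{$S_4$-pair} and how restriction works — this is where a parity argument (the complement $\overline{M_1\cup M_2}$ meets every edge-cut, in particular every odd cut, an even or appropriate number of times, and an odd cycle would force a forbidden cut) does the work; (2) exhibit the gadget(s) — by analogy with Case 1 and Case 2 above, probably a $K_4$-type gadget to force $\nu(a)=2$ (hence also, by duality of the colour classes $g_0\leftrightarrow$ something, $\nu(a)=0$) and a Petersen-type gadget to force $\nu(a)=1$; (3) apply the \mbox{$S_4$-Conjecture} to the gadget-plus-copies graph, choose the \mbox{$S_4$-pair} so that a chosen gadget edge has the right frequency, and trace through (exactly as in the two Cases above) that some copy of $G$ inherits an \mbox{$S_4$-pair} with $\nu(a)=i$. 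The main obstacle I anticipate is step~(1): in the \mbox{FR} setting ``intersection empty'' is a purely local condition and survives restriction for free, whereas ``complement bipartite'' is global, so I must verify that gluing in a bipartite-complement gadget cannot create an odd cycle straddling the $3$-edge-cut, and conversely that restricting cannot destroy bipartiteness — this is presumably exactly the ``slightly more complicated tool'' the authors allude to, and getting the admissible boundary-frequency patterns right (and choosing a gadget whose own \mbox{$S_4$-pairs} realise the needed patterns on its boundary) is the part that requires real care rather than routine imitation.
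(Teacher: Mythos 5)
Your high-level plan (trivial direction, then a gadget-plus-gluing argument in the spirit of Theorem~\ref{Theorem Main Equivalence FR}) matches the paper's strategy, and you have correctly located the difficulty: bipartiteness of $\overline{M_1\cup M_2}$ is a global condition, so one must rule out odd cycles straddling the cut after restriction. But as written the proposal has two genuine gaps. First, the gluing operation you choose does not fit this statement: Conjecture~\ref{Conjecture S4*} prescribes the frequency of a single \emph{edge} $a$, so the natural operation is a $2$-cut connection on $a$ (the two new edges form a $2$-edge-cut, hence automatically carry equal frequency, which is the quantity you want to read off); a $3$-cut connection at an endpoint $w$ of $a$ prescribes a pattern on all three edges at $w$, which is the content of Conjecture~\ref{ConjectureS4**} -- and the paper derives that one only \emph{after}, and by means of, the edge version. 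Second, and more seriously, you have no mechanism for realising frequency $0$. The ``duality'' between colour classes that you invoke does not exist: there is no involution of $S_4$-pairs exchanging frequency-$0$ and frequency-$2$ edges, and frequency $0$ is precisely the dangerous case, since then $a$ lies in $\overline{M_1\cup M_2}$ and may close an odd cycle in the restricted graph.

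The paper's ``slightly more complicated tool'' is exactly a device for this last point: two copies of $G-a$ are placed in series along the path $f_1,f_2,f_3$ of a $K_4$ (via $2$-cut connections on $f_1$ and $f_2$), and this block is glued into every edge of the Petersen graph by a $2$-cut connection on $f_3$; the $S_4$-Conjecture is applied once to the resulting graph $H$. The induced pair of distinct perfect matchings of $P$ yields an edge of frequency $1$ (giving $i=1$) and an edge $e_k$ of frequency $2$; inside the block on $e_k$ the first copy of $G-a$ inherits frequency $2$ and the second inherits frequency $0$, and the three remaining $K_4$-edges form a frequency-$0$ path of length $3$ joining the attachment vertices of the second copy, so an odd cycle of length $\alpha$ through $a$ in the restricted complement would lift to an odd cycle of length $\alpha+4$ in $\overline{M_1\cup M_2}$ in $H$, a contradiction. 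Your step (1) -- the classification of admissible boundary frequency patterns and the restriction lemma -- is left as a string of question marks in your write-up, and without a concrete construction of this kind it is not clear how you would close it; so the proposal, while aimed in the right direction, does not yet constitute a proof.
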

\begin{figure}[h]
      \centering
      \includegraphics[width=0.75\textwidth]{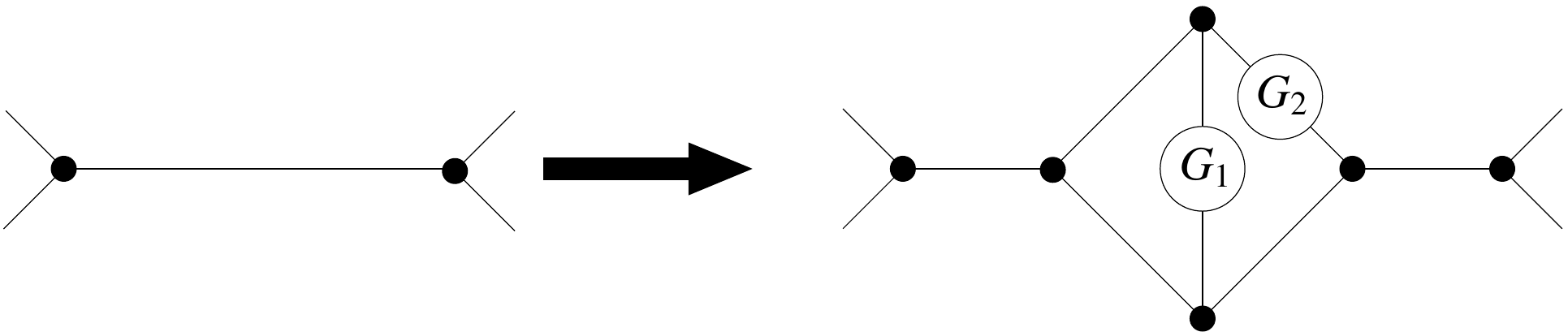}
      \caption{An edge in $P$ transformed into the corresponding structure in $H$.}
      \label{Figure GraphH}
\end{figure}
\begin{proof}
Clearly, Conjecture \ref{Conjecture S4*} implies the \mbox{$S_{4}$-Conjecture} so it suffices to show the converse. Assume the \mbox{$S_{4}$-Conjecture} to be true and let $f_{1}$, $f_{2}$, $f_{3}$ be three consecutive edges of $K_{4}$ inducing a path.
Consider two copies of $G$. Let the edge $a$ in the $i^{th}$ copy of $G$ be denoted by $a_{i}$, for each $i\in\{1,2\}$. Let $K_4'$ be the graph obtained by applying a $2$-cut connection between $f_i$ and $a_{i}$ for each $i\in\{1,2\}$. We refer to the copy of the graph $G-a$ on $f_i$ as $G_i$.

Let $\{e_{1}, \ldots, e_{15}\}$ be the edges of the Petersen graph and let $T_{1}, \ldots, T_{15}$ be 15 copies of $K_{4}'$. For every $i\in\{1,\ldots,15\}$, apply a 2-cut connection on $e_{i}$ and the edge $f_{3}$ of $T_{i}$. Consequently, every edge $e_{i}$ of the Petersen graph is transformed into the structure $E_{i}$ as in Figure \ref{Figure GraphH}, and we refer to $G_{1}$ and $G_{2}$ on $E_{i}$ as $G_{1}^{i}$ and $G_{2}^{i}$, respectively. Let $H$ be the resulting graph. By our assumption, there exists an \mbox{$S_{4}$-pair} of $H$, say $M_{1}$ and $M_{2}$, which induces a pair of two distinct perfect matchings in $P$, say $N_{1}$ and $N_{2}$, respectively. There exists an edge of $P$, say $e_{j}$, for some $j\in\{1,\ldots,15\}$, such that $\nu_{P}[e_{j}:N_{1},N_{2}]=1$, since every two distinct perfect matchings of $P$ have exactly one edge of $P$ in common. Hence, the restriction of $M_{1}$ and $M_{2}$ to the edge set of $G_{1}^{j}$, together with the edge $a$ having the same frequency as $e_{j}$, gives rise to an \mbox{$S_{4}$-pair} of $G$ in which the frequency of $a$ is $1$.

Moreover, there exists an edge of $P$, say $e_{k}$, for some $k\in\{1,\ldots,15\}$, such that $\nu_{P}[e_{k}:N_{1}, N_{2}]=2$. Restricting $M_{1}$ and $M_{2}$ to the edge set of $G_{1}^{k}$, together with the edge $a$ having the same frequency as $e_{k}$, gives rise to an \mbox{$S_{4}$-pair} of $G$, in which the frequency of $a$ is $2$. Also, the restriction of $M_{1}$ and $M_{2}$ to the edge set of $G_{2}^{k}$ gives rise to an \mbox{$S_{4}$-pair} of $G$ ($G_{2}^{k}$ together with $a$), in which the frequency of $a$ is $0$, because if not, then there exists an odd cycle in $G$, say of length $\alpha$, passing through $a$ and having all its edges with frequency $0$. However, this would mean that there is an odd cycle of length $\alpha+4$ on $E_{k}$ in $\overline{M_{1}\cup M_{2}}$ (in $H$), a contradiction.
\end{proof}
As in Section \ref{Section FR}, we state an analogous conjecture to Conjecture \ref{Conjecture FR**}, but for \mbox{$S_{4}$-pairs}:
\begin{conj}\label{ConjectureS4**}
Let $G$ be a bridgeless cubic graph, $w$ a vertex of $G$ and $i,j$ and $k$ three integers in $\{0,1,2\}$ such that $i+j+k=2$. Then, $G$ has an \mbox{$S_{4}$-pair} in which the edges incident to $w$ in a given order have frequencies $(i,j,k)$.
\end{conj}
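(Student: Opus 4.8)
The plan is to prove that Conjecture~\ref{ConjectureS4**} is equivalent to the \mbox{$S_{4}$-Conjecture}. One direction is immediate, since Conjecture~\ref{ConjectureS4**} with frequencies $(2,0,0)$ at any vertex already yields an \mbox{$S_{4}$-pair}. For the converse, by Theorem~\ref{Theorem Equivalence S4 and S4*} it suffices to derive Conjecture~\ref{ConjectureS4**} from Conjecture~\ref{Conjecture S4*}. Fix a vertex $w$, let $a,b,c$ be the edges at $w$, and let $(i,j,k)$ with $i+j+k=2$ be the prescribed frequencies. In an \mbox{$S_{4}$-pair} each of the two perfect matchings covers $w$ by exactly one of $a,b,c$, so the multiset $\{i,j,k\}$ is $\{2,0,0\}$ or $\{1,1,0\}$; since everything below is invariant under relabelling $a,b,c$, it is enough to realize the ordered patterns $(2,0,0)$ and $(1,1,0)$ on $(a,b,c)$. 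The pattern $(2,0,0)$ costs nothing: an \mbox{$S_{4}$-pair} $M_{1},M_{2}$ with $\nu_{G}[a:M_{1},M_{2}]=2$ (which exists by Conjecture~\ref{Conjecture S4*}) has $a\in M_{1}\cap M_{2}$, so both matchings cover $w$ by $a$ and $b,c$ have frequency $0$.

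For the pattern $(1,1,0)$ I would copy the strategy of Case~2 in the proof of Theorem~\ref{Theorem Main Equivalence FR}. Pick a maximum independent set $\{u_{1},u_{2},u_{3},u_{4}\}$ of the Petersen graph $P$ and an edge $e=pq$ of $P$ with $p,q$ outside the set, $p$ adjacent to $u_{1},u_{2}$ and $q$ adjacent to $u_{3},u_{4}$; such a configuration is easily found in $P$ (for instance, in the usual drawing one may take the independent set $\{0,2,8,9\}$ and for $e$ the spoke $\{1,6\}$). Form $P^{*}$ by performing, for $m=1,2,3,4$, a $3$-cut connection between $u_{m}$ and a copy $w_{m}$ of $w$ in the $m$-th copy $G_{m}$ of $G$, routing the principal $3$-edge-cuts so that the edge $c_{m}$ of $G_{m}$ corresponding to $c$ is attached to $p$ when $m\in\{1,2\}$ and to $q$ when $m\in\{3,4\}$; thus each $c_{m}$ is incident to $e$ in $P^{*}$. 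Write $a_{m},b_{m}$ for the edges of $G_{m}$ corresponding to $a,b$. Apply Conjecture~\ref{Conjecture S4*} to $P^{*}$ and the edge $e$ with $i=1$, obtaining an \mbox{$S_{4}$-pair} $M_{1},M_{2}$ of $P^{*}$ with $e$ in exactly one of them, say $e\in M_{1}\setminus M_{2}$ (so $M_{1}\neq M_{2}$).

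Since $e\in M_{1}$ covers $p$ and $q$, the matching $M_{1}$ avoids every $c_{m}$; hence no principal $3$-edge-cut lies in $M_{1}$, and $M_{1}$ induces a perfect matching $N_{1}$ of $P$ with $e\in N_{1}$. As $e\notin M_{2}$, the matching $M_{2}$ covers $p$ through $c_{1}$ or $c_{2}$ and covers $q$ through $c_{3}$ or $c_{4}$, so it misses two of the $c_{m}$, one with index in $\{1,2\}$ and one with index in $\{3,4\}$; call these the \emph{good} copies. Assume first that $M_{2}$ also induces a perfect matching $N_{2}$ of $P$ (equivalently, no principal $3$-edge-cut lies in $M_{2}$). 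Then $N_{1}\neq N_{2}$ because they differ on $e$, so they share exactly one edge $g$; as $g$ has at most one endpoint in the independent set, $N_{1}$ and $N_{2}$ use distinct edges at at least three of $u_{1},\dots,u_{4}$, hence at at least one good copy $G_{m}$. At that $G_{m}$ both $M_{1}$ and $M_{2}$ avoid $c_{m}$ and they use the two distinct edges of $\{a_{m},b_{m}\}$, so $(a_{m},b_{m},c_{m})$ have frequencies $(1,1,0)$. Restricting $M_{1}$ and $M_{2}$ to $G_{m}$ and identifying $a_{m},b_{m},c_{m}$ with $a,b,c$ produces two distinct perfect matchings of $G$ with frequencies $(1,1,0)$ on $(a,b,c)$; the complement of their union is bipartite, because on $V(G)\setminus\{w\}$ it is a subgraph of the bipartite graph $\overline{M_{1}\cup M_{2}}$ while $w$ has degree $1$ in it.

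The one case left is that $M_{2}$ contains an entire principal $3$-edge-cut, necessarily at one of the (at most two) copies $G_{m}$ with $c_{m}\in M_{2}$ — in fact at most one of them, since any two vertices of an independent set of $P$ have a common neighbour whose cover would otherwise be claimed by both full cuts. This is the exact analogue of the case ``$M_{3}$ contains a principal $3$-edge-cut'' at the end of the proof of Theorem~\ref{Theorem Main Equivalence FR}, and I would handle it the same way, by following how $M_{2}$ meets the principal cuts of the good copies (where it still avoids $c_{m}$) to force the $(1,1,0)$ pattern at one of them. I expect this to be the main technical hurdle, but a purely routine one; the genuinely new ingredient compared with the \mbox{Fan-Raspaud} argument is that, with no third matching available, one prescribes instead that the internal gadget edge $e$ has frequency $1$, which is exactly what forces $N_{1}\neq N_{2}$ and drives the rest of the argument.
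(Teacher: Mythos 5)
Your reduction to the single pattern $(1,1,0)$ and your Case~A are sound, but Case~B --- the case where $M_{2}$ contains an entire principal $3$-edge-cut --- is not the ``purely routine'' step you claim, and the completion you sketch actually fails. The difficulty is that once $M_{2}$ swallows a full principal cut, nothing forces $M_{1}$ and $M_{2}$ to disagree at either good copy. Concretely, take the standard Petersen graph with outer cycle $0,1,2,3,4$, inner edges $57,79,96,68,85$ and spokes $i(i+5)$; use the independent set $\{0,2,8,9\}$ and $e=16$, so that the $c$-edges are $01,12,68,69$. Let $N_{1}=\{16,04,23,58,79\}$ (one of the two perfect matchings of $P$ through $e$) and let the trace of $M_{2}$ on the $P$-edges be $\{01,04,05\}\cup\{68,23,79\}$, i.e.\ the full cut at $u=0$ together with a legal cover of the remaining inner vertices. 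The good copies are those at $2$ and at $9$, and there $N_{1}$ and $M_{2}$ use the \emph{same} edge ($23$ and $79$ respectively), so both good copies carry the pattern $(2,0,0)$ rather than $(1,1,0)$. Your argument gives no way to exclude this configuration (bipartiteness of $\overline{M_{1}\cup M_{2}}$ does not obviously rule it out at the level of $P$), and the pattern $\{1,1,0\}$ only appears at a non-good copy with the $0$ on the wrong edge. The underlying reason the Fan--Raspaud Case~2 ending does not transfer is that there both $M_{1}$ and $M_{2}$ were pinned down as the two explicit perfect matchings of $P$ through $e$ before the third matching was analysed; here only one of your two matchings is forced to induce a perfect matching of $P$, which leaves $M_{2}$ far too much freedom.

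The paper's proof avoids all of this with a much lighter gadget: a single $3$-cut connection $G(w)*P(v)$, applying Conjecture~\ref{Conjecture S4*} to prescribe frequency $2$ (not $1$) on an edge $d$ of $P$ adjacent to $c_{w}$. Since $d$ lies in both matchings, $c_{w}$ lies in neither, so no matching can contain the whole principal cut --- the case that breaks your argument simply cannot occur --- and the frequencies of $(a_{w},b_{w},c_{w})$ are $(1,1,0)$, $(2,0,0)$ or $(0,2,0)$. The latter two are excluded because the two matchings would then share two edges of $P$ (namely $d$ and the edge at $v$ towards $a$), hence induce the same perfect matching of $P$, whose complement is a $2$-factor of the non-$3$-edge-colourable Petersen graph and so contains an odd cycle, contradicting bipartiteness. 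If you want to salvage your four-copy construction you must find an additional mechanism to kill the full-cut case; the cleaner fix is to switch to the frequency-$2$ prescription on an edge adjacent to the designated $c$-edge.
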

The following theorem shows that this conjecture is actually equivalent to Conjecture \ref{Conjecture S4*}, and so to the \mbox{$S_{4}$-Conjecture} by Theorem \ref{Theorem Equivalence S4 and S4*}.
\begin{theorem}\label{Theorem Equivalence S4 S4**}
Conjecture \ref{ConjectureS4**} is equivalent to the \mbox{$S_{4}$-Conjecture}.
\end{theorem}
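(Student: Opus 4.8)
The plan is to mimic, in the $S_4$-setting, the structure of the proof of Theorem~\ref{Theorem Main Equivalence FR}, now using Theorem~\ref{Theorem Equivalence S4 and S4*} in place of the equivalence with Conjecture~\ref{Conjecture FR*}. Since Conjecture~\ref{ConjectureS4**} trivially implies Conjecture~\ref{Conjecture S4*} (take the $S_4$-pair with the prescribed frequencies on the three edges at $w$ and forget two of the three coordinates), it suffices to assume Conjecture~\ref{Conjecture S4*} and produce, for a given vertex $w$ with incident edges $a,b,c$, an $S_4$-pair realising each unordered frequency pattern summing to $2$. Up to relabelling the edges at $w$, there are only two patterns to handle: $(2,0,0)$ and $(1,1,0)$. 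As in the Fan--Raspaud case, the idea is to build an auxiliary bridgeless cubic graph by performing $3$-cut connections between copies of $G$ (identifying $w_i$ with suitably chosen vertices of a small gadget graph), apply Conjecture~\ref{Conjecture S4*} to a well-chosen edge of the gadget, and then read off the frequencies induced on the edges $a_i,b_i,c_i$ inside one of the copies $G_i$.

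For the pattern $(1,1,0)$ I would use the same $K_4$/$K_4^*$ construction as in Case~1 of Theorem~\ref{Theorem Main Equivalence FR}: take two copies of $G$, glue $w_1$ and $w_2$ onto two vertices of $K_4$ via $3$-cut connections so that $b_1$ is adjacent to $b_2$ and $c_1$ is adjacent to $c_2$ (so $a_1=a_2$), and invoke Conjecture~\ref{Conjecture S4*} on the remaining $K_4$-edge to force frequency $0$ on $a_1$. Then no principal $3$-edge-cut lies entirely in $M_1\cup M_2$ in a bad way; an $S_4$-pair must cover each vertex of $G_1$, so the restriction to $G_1$ is again an $S_4$-pair of $G$, and the frequency pattern at $w$ is forced to be a permutation of $(1,1,0)$ — a short case analysis on whether $b_1$ or $c_1$ carries the remaining frequency (and switching to $G_2$ if necessary) finishes this case. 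For the pattern $(2,0,0)$ I expect to need the Petersen-graph gadget $P^*$ from Case~2: four copies of $G$ attached at a maximum independent set of $P$, then Conjecture~\ref{Conjecture S4*} applied to the central edge $e$ to give it frequency $2$. Since $M_1,M_2$ then induce the two perfect matchings of $P$ through $e$, one checks that on some copy $G_i$ the induced $S_4$-pair of $G$ has frequencies $(2,0,0)$ (in some order) at $w$; the symmetry-plus-case-analysis argument used for $(1,1,1)$ in Theorem~\ref{Theorem Main Equivalence FR} should transfer, with the bookkeeping for $S_4$-pairs being that each $M_i$ is a perfect matching so each copy of $G-w$ sees exactly a $1$- or $2$-factor's worth of edges on its boundary.

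The subtlety, and the step I expect to cause the most trouble, is verifying that the restriction of an $S_4$-pair of the auxiliary graph to a single copy $G_i$ really is an $S_4$-pair of $G$ — i.e.\ that $\overline{M_1\cup M_2}$ restricted to $G_i$ (together with the edge(s) of $\{a_i,b_i,c_i\}$ of frequency $0$ collapsed back to $a,b,c$) is still bipartite. In the Fan--Raspaud proof this is automatic because the condition is just emptiness of a triple intersection; here we must rule out that a short odd cycle of $\overline{M_1\cup M_2}$ inside $G_i$ closes up through the gadget, exactly as in the last paragraph of the proof of Theorem~\ref{Theorem Equivalence S4 and S4*} (where an odd cycle of length $\alpha$ through $a$ would give one of length $\alpha+4$ through the structure $E_k$). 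So the key computation is a parity check: tracing any candidate odd cycle that leaves $G_i$ through its principal $3$-edge-cut, it must re-enter through the same cut and the gadget contributes an \emph{even} number of edges on any such closing path, so an odd cycle confined to $\overline{M_1\cup M_2}$ restricted to $G_i\cup\{a,b,c\}$ would lift to an odd cycle of $\overline{M_1\cup M_2}$ in the whole auxiliary graph, contradicting that $M_1,M_2$ form an $S_4$-pair there. Once this parity bookkeeping is set up cleanly for the $K_4^*$ and $P^*$ gadgets, the two cases close exactly as their Fan--Raspaud counterparts, giving the equivalence.
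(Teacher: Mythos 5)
Your high-level strategy (reduce to Conjecture~\ref{Conjecture S4*} via Theorem~\ref{Theorem Equivalence S4 and S4*}, then realise the remaining frequency patterns by gluing copies of $G$ onto a gadget) is sound, but the execution has genuine gaps. First, you miss that the pattern $(2,0,0)$ needs no gadget at all: applying Conjecture~\ref{Conjecture S4*} to the edge $a$ with $i=2$ puts $a$ in both $M_1$ and $M_2$, and since these are perfect matchings this automatically forces $\nu(b)=\nu(c)=0$. This is exactly why the paper reduces everything to the single case $(1,1,0)$. Worse, your $P^*$ construction is mis-targeted: with only \emph{two} perfect matchings, forcing frequency $2$ on the central edge $e$ excludes $a_1,c_2,c_3,a_4$ and makes each $M_i$ induce a perfect matching of $P$ through $e$, so the pattern you would read off a copy $G_i$ is $(0,1,1)$ --- i.e.\ the $(1,1,0)$ type --- not $(2,0,0)$.

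Second, the $K_4^*$ gadget does not deliver $(1,1,0)$. Prescribing frequency $0$ on $u_3u_4$ does not force frequency $0$ on $a_1$: both $M_i$ may contain all three of $a_1,b_1,c_1$ (an odd, hence legal, intersection with the principal $3$-edge-cut), giving $(2,2,2)$ on $G_1$ and $(2,0,0)$ on $G_2$. Even prescribing $\nu(a_1)=0$ directly leaves open the configuration $b_1\in M_1\cap M_2$ and $c_2\in M_1\cap M_2$, which yields $(0,2,0)$ on $G_1$ and $(0,0,2)$ on $G_2$; nothing in the $S_4$-pair condition visibly excludes this, so your claim that the pattern at $w$ ``is forced to be a permutation of $(1,1,0)$'' is unjustified, and switching to $G_2$ does not rescue it. The subtlety you flag about whether the restriction to $G_i$ is still an $S_4$-pair is real, but your proposed resolution is not a generic parity fact and would have to be checked per gadget; it also cuts the wrong way in your bad cases, where two edges at $w$ have frequency $0$. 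The paper's actual proof avoids all of this with a single $3$-cut connection $G(w)*P(v)$: it prescribes frequency $2$ on an edge $d$ of $P$ adjacent to $c_w$, which forces $\nu(c_w)=0$ and makes the frequencies on the principal cut sum to $2$; the outcome $(2,0,0)$ or $(0,2,0)$ is then impossible because $M_1$ and $M_2$ would induce the \emph{same} perfect matching of $P$ (they would share both $d$ and the edge at $v$), so $P$ would have a perfect matching with bipartite complement, contradicting that $P$ is not $3$-edge-colourable. The restriction step is immediate there because only one edge at $w$ has frequency $0$, so no odd cycle of $\overline{M_1\cup M_2}$ in $G$ can pass through $w$.
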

\begin{proof}
Since the \mbox{$S_{4}$-Conjecture} is equivalent to Conjecture \ref{Conjecture S4*}, it suffices to show the equivalence of Conjectures \ref{Conjecture S4*} and \ref{ConjectureS4**}. Clearly, Conjecture \ref{ConjectureS4**} implies Conjecture \ref{Conjecture S4*} and so we only need to show the converse. Let $a,b$ and $c$ be the edges incident to $w$ such that the frequencies $(i,j,k)$ are to be assigned to $(a,b,c)$. We only need to prove the case when $(i,j,k)$ is equal to $(1,1,0)$, as all other cases follow from Conjecture \ref{Conjecture S4*}. 
\begin{figure}[h]
      \centering
      \includegraphics[width=0.4\textwidth]{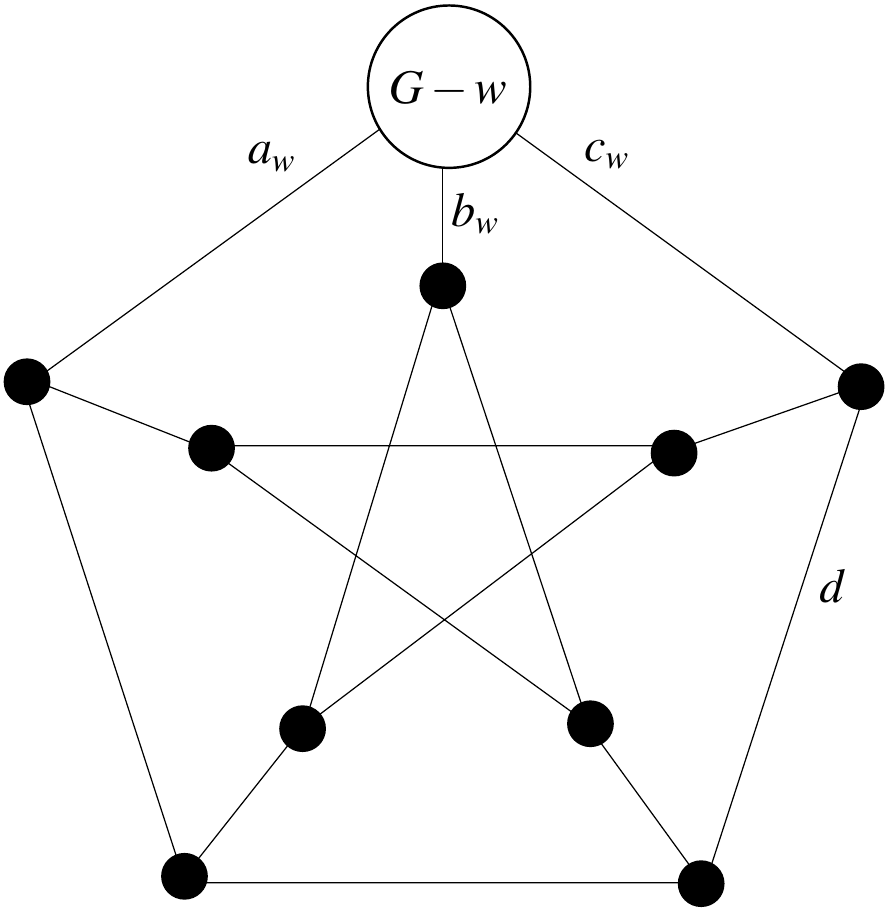}
      \caption{The graph $G(w)*P(v)$ from Theorem \ref{Theorem Equivalence S4 S4**}.}
      \label{Figure S4110}
\end{figure}

Consider the graph $G(w)*P(v)$, where $P$ is the Petersen graph and $v$ is any vertex of $P$. We refer to the edges corresponding to $a,b$ and $c$ in $G(w)*P(v)$, as $a_{w},b_{w}$ and $c_{w}$. Let $d$ be an edge originally belonging to $P$ and adjacent to $c_{w}$ in $G(w)*P(v)$. Since we are assuming Conjecture \ref{Conjecture S4*} to be true, there exists an \mbox{$S_{4}$-pair} in $G(w)*P(v)$ in which $d$ has frequency $2$. If the frequencies of $(a_{w},b_{w},c_{w})$ are $(1,1,0)$, then we are done, because the \mbox{$S_{4}$-pair} for $G(w)*P(v)$ restricted to the edges in $G-w$, together with $a$ and $b$ having the same frequencies as $a_{w}$ and $b_{w}$, give an \mbox{$S_{4}$-pair} for $G$ with the desired property. We claim that this must be the case. For, suppose not. Then, without loss of generality, the frequencies of $(a_{w},b_{w},c_{w})$ are $(2,0,0)$. This implies that all the edges of $G(w)*P(v)$ originally in $P$ have either frequency 0 or 2, since the two perfect matchings in the \mbox{$S_{4}$-pair} induce the same perfect matching in $P$. However, this implies that $P$ has a perfect matching whose complement is bipartite, a contradiction since $P$ is not $3$-edge-colourable.
\end{proof}

As in \cite{Vahan}, a minimal counterexample to Conjecture \ref{ConjectureS4**} (but not necessarily to the \mbox{$S_{4}$-Conjecture}) is cyclically 4-edge-connected. We omit the proof of this result as it is very similar to the proof of Theorem 2 in \cite{Vahan}.
\section{Further Results on the \mbox{\boldmath{$S_{4}$}-Conjecture}}\label{Section New Results S4}
Little progress has been made on the \mbox{Fan-Raspaud} Conjecture so far. Bridgeless cubic graphs which trivially satisfy this conjecture are those which can be edge-covered by four perfect matchings. In this case, every three perfect matchings from a cover of this type form an \mbox{FR-triple} since every edge has frequency one or two with respect to this cover. Therefore, a possible counterexample to the \mbox{Fan-Raspaud} Conjecture should be searched for in the class of bridgeless cubic graphs whose edge-set cannot be covered by four perfect matchings, see for instance \cite{EsperetMazzuoccolo}. In 2009, M\'{a}\v{c}ajov\'{a} and \v{S}koviera \cite{MacajovaSkovieraOddness2} shed some light on the \mbox{Fan-Raspaud} Conjecture by proving it for bridgeless cubic graphs having oddness two. One of the aims of this paper is to show that even if the \mbox{$S_{4}$-Conjecture} is still open, some results are easier to extend than the corresponding ones for the \mbox{Fan-Raspaud} Conjecture. Clearly, the result by M\'{a}\v{c}ajov\'{a} and \v{S}koviera in \cite{MacajovaSkovieraOddness2} implies the following result:

\begin{theorem}\label{Theorem S4 Oddness 2}
Let $G$ be a bridgeless cubic graph of oddness two. Then, $G$ has an \mbox{$S_{4}$-pair}. 
\end{theorem}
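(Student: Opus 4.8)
The plan is to derive Theorem \ref{Theorem S4 Oddness 2} directly from the result of M\'{a}\v{c}ajov\'{a} and \v{S}koviera \cite{MacajovaSkovieraOddness2}, which asserts that every bridgeless cubic graph of oddness two satisfies the \mbox{Fan-Raspaud} Conjecture, i.e.\ admits an \mbox{FR-triple} $M_{1}, M_{2}, M_{3}$ with $M_{1}\cap M_{2}\cap M_{3}=\emptyset$. The key observation is that, as shown in Section \ref{Section List Conjectures}, an \mbox{FR-triple} yields two perfect matchings whose intersection contains no \mbox{odd-cut}, and by the Proposition proved there, Conjecture \ref{ConjectureOddCut} implies the \mbox{$S_{4}$-Conjecture}. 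So the chain of implications already set up in the paper does all the work: oddness two $\Rightarrow$ \mbox{FR-triple} $\Rightarrow$ no \mbox{odd-cut} in a pairwise intersection $\Rightarrow$ \mbox{$S_{4}$-pair}.

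Concretely, I would first invoke \cite{MacajovaSkovieraOddness2} to obtain an \mbox{FR-triple} $M_{1}, M_{2}, M_{3}$ of $G$. Then I would recall the short argument already given in the excerpt: since every perfect matching meets every \mbox{odd-cut} in at least one edge, if some pairwise intersection, say $M_{2}\cap M_{3}$, contained an \mbox{odd-cut} $X$, then $M_{1}$ would meet $X$, forcing $|M_{1}\cap M_{2}\cap M_{3}|\geq 1$, contradicting $M_{1}\cap M_{2}\cap M_{3}=\emptyset$. Hence $M_{2}\cap M_{3}$ contains no \mbox{odd-cut}. Finally I would apply the Proposition stating that Conjecture \ref{ConjectureOddCut} implies the \mbox{$S_{4}$-Conjecture} (with the two perfect matchings $M_{2}$ and $M_{3}$ in the role of $M_{1}$ and $M_{2}$ there) to conclude that $\overline{M_{2}\cup M_{3}}$ is bipartite, so that $\{M_{2}, M_{3}\}$ is an \mbox{$S_{4}$-pair} of $G$.

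There is essentially no obstacle here: the statement is explicitly flagged in the text as a clear consequence of \cite{MacajovaSkovieraOddness2}, and every intermediate implication has already been established earlier in the paper. The only thing to be careful about is to phrase the deduction so that it is transparent which earlier results are being chained together, rather than re-proving them; a one-paragraph proof citing the oddness-two theorem and the two implications above suffices. If one wanted a self-contained argument avoiding the detour through \mbox{odd-cuts}, one could instead take an \mbox{FR-triple}, note that the three pairwise intersections $M_i\cap M_j$ are pairwise disjoint (an edge in two of them would lie in all three), and argue directly that $\overline{M_{i}\cup M_{j}}$ cannot contain an odd cycle $C$ for at least one pair, since otherwise $\partial V(C)$ would be forced into $M_{i}\cap M_{j}$ and, decomposed into the cuts between $C$ and the other components of $\overline{\partial V(C)}$, some such cut would be odd and contained in $M_{i}\cap M_{j}$ — but this is exactly the Proposition's argument again, so citing it is cleaner.
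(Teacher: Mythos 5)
Your proof is correct, but it takes a genuinely different route from the one the paper writes out. You derive the theorem by chaining three facts: the M\'{a}\v{c}ajov\'{a}--\v{S}koviera theorem that oddness-two graphs admit an \mbox{FR-triple}, the observation that no pairwise intersection of an \mbox{FR-triple} contains an \mbox{odd-cut}, and the Proposition of Section \ref{Section List Conjectures} (whose proof is local to a given pair of perfect matchings, so it does apply to $M_2,M_3$ as you use it). This is exactly the deduction the paper alludes to with the sentence ``Clearly, the result by M\'{a}\v{c}ajov\'{a} and \v{S}koviera \ldots implies the following result,'' so the logic is sound and every link is already established in the text. The paper nevertheless deliberately gives two \emph{direct} proofs instead: Proof~1 builds the second perfect matching by hand from a minimal perfect matching $M_1$ and an alternating path joining the two odd cycles of $\overline{M_1}$; Proof~2 gets it from Lemma \ref{LemmaFractionalPM} by putting weight $1$ on two adjacent edges of each odd cycle. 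The point of doing so is that your route is a dead end for higher oddness --- the \mbox{Fan-Raspaud} Conjecture is not known for oddness four, so there is no \mbox{FR-triple} to start from --- whereas the fractional-perfect-matching argument of Proof~2 is precisely the template that the authors then push to oddness four (Theorem \ref{TheoremOddness4}) and to Proposition \ref{Proposition ni lowerbound}. So your argument buys brevity at the cost of invoking a substantially harder external theorem and of not generalising; the paper's proofs are self-contained, elementary, and chosen for their extendability.
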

We first give a proof of Theorem \ref{Theorem S4 Oddness 2} in the same spirit of that used in \cite{MacajovaSkovieraOddness2}, however much shorter since we are proving a weaker result.\\

\noindent \emph{Proof 1 of Theorem \ref{Theorem S4 Oddness 2}.} Let $M_{1}$ be a minimal perfect matching of $G$, and let $C_{1}$ and $C_{2}$ be the two odd cycles in $\overline{M_{1}}$. Colour the even cycles in $\overline{M_{1}}$ using two colours, say $1$ and $2$. For each $i\in\{1,2\}$, let $E_{i}$ be the set of edges belonging to the even cycles in $\overline{M_{1}}$ and having colour $i$. In $G$, there must exist a path $Q$ whose edges alternate in $M_{1}$ and $E_{1}$ and whose end-vertices belong to $C_{1}$ and $C_{2}$, respectively, since $C_1$ and $C_2$ are odd cycles. Note that since the edges of $C_{1}$ and $C_{2}$ are not edges in $M_{1} \cup E_{1}$, every other vertex on $Q$ which is not an end-vertex does not belong to $C_{1}$ and $C_{2}$. 

For each $i\in\{1,2\}$, let $v_{i}$ be the end-vertex of $Q$ belonging to $C_{i}$, and let $M_{C_{i}}$ be the unique perfect matching of $C_{i}-v_{i}$. Let $M_{2}:= (M_{1}\cap Q)\cup (E_{1}\setminus Q)\cup M_{C_{1}} \cup M_{C_{2}}$. Clearly, $M_{2}$ is a perfect matching of $G$ which intersects $C_{1}$ and $C_{2}$, and so $\overline{M_{1} \cup M_{2}}$ is bipartite. \hfill $\square$\\

We now give a second alternative proof of the same theorem using fractional perfect matchings, which we will show to be easier to use for graphs having larger oddness. Let $w$ be a vector in $\mathbb{R}^{\vert E(G)\vert}$. The entry of $w$ corresponding to $e\in E(G)$ is denoted by $w(e)$, and for $A\subseteq E(G)$, we let the weight of $A$, denoted by $w(A)$, to be equal to $\sum_{e\in A}w(e)$. The vector $w$ is said to be a \emph{fractional perfect matching} of $G$ if:
\begin{enumerate}
\item $w(e)\in [0,1]$ for each $e \in E(G)$,
\item $w(\partial\{v\})=1$ for each $v \in V(G)$, and
\item $w(\partial W)\geq 1$ for each $W \subseteq V(G)$ of odd cardinality.
\end{enumerate}
The following lemma is presented in \cite{FractionalPM} and it is a consequence of Edmonds' characterisation of perfect matching polytopes in \cite{Edmonds}.
\begin{lemma}\label{LemmaFractionalPM}
If $w$ is a fractional perfect matching in a graph $G$, and $c\in \mathbb{R}^{\vert E(G)\vert}$, then $G$ has a perfect matching $N$ such that $$c\cdot\chi^{N}\geq c\cdot w,$$ where $\cdot$ denotes the inner product. Moreover, there exists a perfect matching satisfying the above inequality and which contains exactly one edge of each \mbox{odd-cut} $X$ with $w(X)=1$. 
\end{lemma}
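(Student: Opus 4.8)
The plan is to derive the lemma directly from Edmonds' perfect matching polytope theorem \cite{Edmonds}. That theorem asserts that for any graph $G$ the polytope $\mathrm{PM}(G)$, defined as the convex hull of the characteristic vectors $\chi^{N}$ of the perfect matchings $N$ of $G$, equals the set of vectors $x\in\mathbb{R}^{\vert E(G)\vert}$ satisfying $x(e)\geq 0$ for every $e\in E(G)$, $x(\partial\{v\})=1$ for every $v\in V(G)$, and $x(\partial W)\geq 1$ for every $W\subseteq V(G)$ of odd cardinality. Conditions (1)--(3) in the definition of a fractional perfect matching imply exactly these inequalities (the upper bound $w(e)\leq 1$ is not even needed, though it is assumed), so $w\in\mathrm{PM}(G)$. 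Hence we may write $w=\sum_{i=1}^{k}\lambda_{i}\chi^{N_{i}}$ for perfect matchings $N_{1},\ldots,N_{k}$ of $G$ and strictly positive reals $\lambda_{1},\ldots,\lambda_{k}$ with $\sum_{i}\lambda_{i}=1$; in particular $G$ has at least one perfect matching.

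For the first assertion, observe that $c\cdot w=\sum_{i=1}^{k}\lambda_{i}\,(c\cdot\chi^{N_{i}})$ is a convex combination of the reals $c\cdot\chi^{N_{i}}$, so at least one of them is at least the average: $c\cdot\chi^{N_{j}}\geq c\cdot w$ for some index $j$. Setting $N:=N_{j}$ already proves the first part.

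For the ``moreover'' part, fix any odd-cut $X=\partial W$ with $\vert W\vert$ odd and $w(X)=1$. Writing $a$ for the number of edges of $N_{i}$ with both ends in $W$, each vertex of $W$ being covered exactly once gives $2a+\vert N_{i}\cap X\vert=\vert W\vert$, so $\vert N_{i}\cap X\vert$ is odd and hence $\geq 1$ for every $i$. Therefore $1=w(X)=\sum_{i=1}^{k}\lambda_{i}\,\vert N_{i}\cap X\vert$ is a convex combination of positive integers each at least $1$, which forces $\vert N_{i}\cap X\vert=1$ for every $i$ (all $\lambda_i$ being positive). Since this holds simultaneously for every odd-cut $X$ with $w(X)=1$, the matching $N=N_{j}$ selected above --- being one of the $N_{i}$ --- contains exactly one edge of each such $X$, which completes the argument.

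The only point requiring a little care is that the convex decomposition used in the ``moreover'' part must be the same one from which the good matching is extracted in the first part, so that a single $N_{i}$ with $\lambda_{i}>0$ serves both purposes; this is automatic, since the extraction in the first step ranges precisely over the matchings appearing with positive coefficient. Beyond that I do not foresee a genuine obstacle: the substance of the lemma is Edmonds' characterisation, and everything else is an elementary averaging argument.
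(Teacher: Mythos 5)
Your proof is correct, and it follows exactly the route the paper indicates: the paper does not prove this lemma itself but cites it from Kaiser, Kr\'{a}l and Norine \cite{FractionalPM} as ``a consequence of Edmonds' characterisation of perfect matching polytopes,'' which is precisely the convex-decomposition-plus-averaging argument you carry out, including the parity observation forcing $\vert N_i\cap X\vert=1$ on every tight odd cut. Nothing is missing.
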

\begin{rem}\label{Remark3cuts}
If we let $w(e)=\sfrac{1}{3}$ for all $e\in E(G)$, for some graph $G$, then we know that $w$ is a fractional perfect matching of $G$. Also, since the weight of every $3$-cut is one, then by Lemma \ref{LemmaFractionalPM} there exists a perfect matching of $G$ containing exactly one edge of each 3-cut of $G$. 
\end{rem}

\noindent \emph{Proof 2 of Theorem \ref{Theorem S4 Oddness 2}.} Let $M_{1}$ be a minimal perfect matching of $G$, and let $C_{1}$ and $C_{2}$ be the two odd cycles in $\overline{M_{1}}$. For each $i\in\{1,2\}$, let $e_{1}^{i}$ and $e_{2}^{i}$ be two adjacent edges belonging to $C_{i}$. We define the vector $c\in \mathbb{R}^{\vert E(G)\vert}$ such that
$$c(e)= \left\{
\begin{array}{cl}
1 & $if $ e\in \cup_{i=1}^{2}\{e_{1}^{i}, e_{2}^{i}\},\\
0 & $otherwise.$
\end{array}\right.$$
By Remark \ref{Remark3cuts}, we also know that if we let $w(e)=\sfrac{1}{3}$ for all $e\in E(G)$, then $w$ is a fractional perfect matching of $G$. Hence, by Lemma \ref{LemmaFractionalPM}, there exists a perfect matching $M_{2}$ such that $c\cdot\chi^{M_{2}}\geq c\cdot w,$
which implies that $$\vert\cup_{i=1}^{2}\{e_{1}^{i}, e_{2}^{i}\} \cap M_{2} \vert\geq \sfrac{1}{3}\times 2 \times 2 = \sfrac{4}{3}>1.$$ 
Therefore, for each $i\in\{1,2\}$, there exists exactly one $j \in\{1,2\}$ such that $e_{j}^{i} \in M_{2}$. Hence, $M_{2}$ intersects $C_{1}$ and $C_{2}$ and so $\overline{M_{1}\cup M_{2}}$ is bipartite. \hfill $\square$\\

Using the same idea as in Proof 2 of Theorem \ref{Theorem S4 Oddness 2}, we also prove that the \mbox{$S_{4}$-Conjecture} is true for graphs having oddness four. 
\begin{theorem}\label{TheoremOddness4}
Let $G$ be a bridgeless cubic graph of oddness four. Then, $G$ has an \mbox{$S_{4}$-pair}. 
\end{theorem}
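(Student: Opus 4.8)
The plan is to mimic Proof 2 of Theorem \ref{Theorem S4 Oddness 2}, but now starting from a minimal perfect matching $M_{1}$ whose $2$-factor $\overline{M_{1}}$ has exactly four odd cycles $C_{1},C_{2},C_{3},C_{4}$. The goal is to find a second perfect matching $M_{2}$ that meets each $C_{i}$, since then $\overline{M_{1}\cup M_{2}}$ cannot contain an odd cycle: any odd cycle of $\overline{M_{1}\cup M_{2}}$ would have to lie inside some $C_{i}$, but $M_{2}\cap C_{i}\neq\emptyset$ breaks each $C_{i}$ into paths. As before, I would set $w(e)=\sfrac{1}{3}$ for every edge, which is a fractional perfect matching by Remark \ref{Remark3cuts}, and choose a cost vector $c$ supported on a few carefully chosen edges of the $C_{i}$'s; Lemma \ref{LemmaFractionalPM} then produces a perfect matching $M_{2}$ with $c\cdot\chi^{M_{2}}\geq c\cdot w$ and, crucially, containing exactly one edge of every $3$-cut $X$ with $w(X)=1$.

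The naive choice — putting weight $1$ on two adjacent edges of each $C_{i}$, as in the oddness-two proof — only forces $c\cdot\chi^{M_{2}}\geq \sfrac{1}{3}\cdot 2\cdot 4=\sfrac{8}{3}>2$, which guarantees $M_{2}$ hits at least three of the four cycles but not necessarily all four. So the key extra idea must exploit the ``moreover'' clause of Lemma \ref{LemmaFractionalPM}. I would look for $3$-edge-cuts (or more generally odd-cuts of weight exactly $1$, i.e. $3$-edge-cuts under the all-$\sfrac13$ weighting) that separate the odd cycles from one another, so that forcing $M_{2}$ to take exactly one edge of each such cut channels an edge into the ``missing'' cycle. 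Concretely, if some $C_{i}$ is joined to the rest of the graph by a $3$-edge-cut, then $M_2$ automatically meets $C_i$; the real work is the case where no such cut exists, where I would instead combine the weight argument on, say, $C_{1},C_{2},C_{3}$ (choosing $c$ on adjacent-edge pairs in these three cycles, giving $c\cdot\chi^{M_2}\ge \sfrac13\cdot 2\cdot 3=2>1$, hence $M_2$ meets \emph{all} of $C_1,C_2,C_3$) together with a structural argument forcing $M_2$ to meet $C_4$ as well — for instance via a suitable odd-cut of weight $1$ isolating $C_{4}$, or by a parity/counting argument on $\partial V(C_4)$ analogous to the one in the proof that Conjecture \ref{ConjectureOddCut} implies the \mbox{$S_{4}$-Conjecture}.

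An alternative route, which may be what is needed if the cut structure is uncooperative, is a case analysis on how the four odd cycles are linked: either there is a reduction along a nontrivial $3$-edge-cut (apply the star-product / $3$-cut connection machinery and induct, using that a $3$-cut connection of two graphs with $S_4$-pairs whose prescribed frequencies on the principal $3$-edge-cut agree yields an $S_4$-pair of the whole — this is exactly why Theorem \ref{Theorem Equivalence S4 S4**} on prescribing frequencies at a vertex/$3$-cut was proved first), so the graph may be assumed cyclically $4$-edge-connected; or else one works directly with the fractional argument. I expect the main obstacle to be precisely this last, cyclically-$4$-edge-connected, ``essentially $4$-edge-connected'' case: ensuring that the cost vector $c$ and the odd-cuts of weight $1$ can always be arranged so that Lemma \ref{LemmaFractionalPM} forces $M_2$ to intersect \emph{every} one of the four odd cycles simultaneously, rather than just three of them. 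Getting the bookkeeping of which adjacent-edge pairs to weight, and verifying that the relevant cuts indeed have weight exactly $1$ so the ``moreover'' clause applies, is where the care will be needed.
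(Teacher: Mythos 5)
Your proposal correctly identifies the obstacle --- the uniform weight $w\equiv\sfrac13$ only yields $c\cdot\chi^{M_2}\geq\sfrac83>2$, forcing $M_2$ to meet three of the four odd cycles but not all four --- but it stalls exactly where the paper's proof introduces its key idea, and the routes you sketch to get past it are not the ones that work. The missing idea is not the ``moreover'' clause of Lemma \ref{LemmaFractionalPM}, nor a decomposition along $3$-edge-cuts with induction: it is to change the fractional perfect matching itself. By Remark \ref{Remark3cuts}, first fix a perfect matching $N$ meeting every $3$-cut of $G$ in exactly one edge, and then choose the two adjacent edges $e_1^i,e_2^i$ of each $C_i$ inside $C_i\cap\overline{N}$ (possible because a matching omits at least two consecutive edges of any odd cycle). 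Now define $w(e)=\sfrac15$ for $e\in N$ and $w(e)=\sfrac25$ otherwise. This $w$ is still a fractional perfect matching: each vertex sees weight $\sfrac15+\sfrac25+\sfrac25=1$, every $3$-cut has weight exactly $1$ because $N$ meets it once, and every larger odd cut has at least five edges, hence weight at least $1$. Since all eight weighted edges now carry weight $\sfrac25$ instead of $\sfrac13$, Lemma \ref{LemmaFractionalPM} gives a perfect matching $M_2$ with $c\cdot\chi^{M_2}\geq 2\times\sfrac25\times4=\sfrac{16}{5}>3$, so $M_2$ contains exactly one edge from each adjacent pair and meets all four cycles.

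As for your fallback plans: the ``moreover'' clause only constrains odd cuts whose $w$-weight is exactly $1$, and there is no reason such a cut isolates the missed cycle; the parity argument you cite from the proof that Conjecture \ref{ConjectureOddCut} implies the \mbox{$S_4$-Conjecture} needs the hypothesis that $M_1\cap M_2$ contains no odd cut, which is precisely what you do not have; and reducing along nontrivial $3$-edge-cuts would require gluing lemmas for \mbox{$S_4$-pairs} across star products that the paper does not develop for this purpose. So the gap is real, but it is closed by a single, local modification of your own framework --- reweighting $w$ relative to a $3$-cut-respecting perfect matching $N$ --- rather than by any of the structural case analyses you propose.
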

\begin{proof}
Let $M_{1}$ be a minimal perfect matching of $G$, and let $C_{1}, C_{2}, C_{3}$ and $C_{4}$ be the four odd cycles in $\overline{M_{1}}$. By Remark \ref{Remark3cuts}, there exists a perfect matching $N$ of $G$ such that if $G$ has any 3-cuts, then $N$ intersects every 3-cut of $G$ in one edge. Moreover, for every $i\in\{1,\ldots,4\}$, there exists at least a pair of adjacent edges $e_{1}^{i}$ and $e_{2}^{i}$ belonging to $C_{i}\cap\overline{N}$. We define the vector $c\in \mathbb{R}^{\vert E(G)\vert}$ such that
$$c(e)= \left\{
\begin{array}{cl}
1 & $if $ e\in \cup_{i=1}^{4}\{e_{1}^{i}, e_{2}^{i}\},\\
0 & $otherwise.$
\end{array}\right.$$
We also define the vector $w\in \mathbb{R}^{\vert E(G)\vert}$ as follows:
$$w(e)= \left\{
\begin{array}{cl}
\sfrac{1}{5} & $if $ e \in N,\\
\sfrac{2}{5} & $otherwise.$
\end{array}\right.$$
The vector $w$ is clearly a fractional perfect matching of $G$ because, in particular, $N$ intersects every 3-cut in one edge and so $w(X)\geq 1$ for each \mbox{odd-cut} $X$ of $G$. Hence, by Lemma \ref{LemmaFractionalPM}, there exists a perfect matching $M_{2}$ such that $c\cdot\chi^{M_{2}}\geq c\cdot w,$ which implies that $$\vert\cup_{i=1}^{4}\{e_{1}^{i}, e_{2}^{i}\} \cap M_{2} \vert\geq \sfrac{2}{5}\times 2 \times 4 = \sfrac{16}{5}>3.$$
Therefore, for each $i\in\{1,\ldots,4\}$, there exists exactly one $j \in\{1,2\}$ such that $e_{j}^{i} \in M_{2}$. Hence, $M_{2}$ intersects $C_{1}, C_{2}, C_{3}$ and $C_{4}$ and so $\overline{M_{1}\cup M_{2}}$ is bipartite.
\end{proof}
As the above proofs show us, extending results with respect to the \mbox{$S_{4}$-Conjecture} is easier than in the case of the \mbox{Fan-Raspaud} Conjecture and this is why we believe that a proof of the $S_4$-conjecture could be a first feasible step towards a solution of the \mbox{Fan-Raspaud} Conjecture. For graphs having oddness at least six we are not able to prove the existence of an \mbox{$S_{4}$-pair} and we wonder how many perfect matchings we need such that the complement of their union is bipartite. In the next proposition we use the technique used in Theorem \ref{TheoremOddness4} and show that given a bridgeless cubic graph $G$, if $\omega(G)\leq 5^{k-1}-1$ for some positive  integer $k$, then there exist $k$ perfect matchings such that the complement of their union is bipartite. Note that for $k=2$ we obtain $\omega(G)\leq 4$.
\begin{prop}\label{Proposition ni lowerbound}
Let $G$ be a bridgeless cubic graph and let $\mathcal{C}$ be a collection of disjoint odd cycles in $G$ such that $|\mathcal{C}|\leq 5^{k-1}-1$ for some positive integer $k$. Then, there exist $k-1$ perfect matchings of $G$, say $M_1,\ldots,M_{k-1}$, such that for every $C \in \mathcal{C}$, there exists $j\in\{1,\ldots,k-1\}$ for which $C \cap M_j \neq \emptyset$. Moreover, if $\omega(G)\leq 5^{k-1}-1$, then there exist $k$ perfect matchings such that the complement of their union is bipartite.
\end{prop}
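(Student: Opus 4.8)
The plan is to bootstrap the technique of Proof 2 of Theorem \ref{Theorem S4 Oddness 2} and Theorem \ref{TheoremOddness4}, building the $k-1$ perfect matchings one at a time so that each one ``hits'' a large fraction of the cycles that remain untouched. I would argue by induction on $k$. The base case $k=1$ is vacuous: $|\mathcal{C}|\leq 5^{0}-1=0$ means $\mathcal{C}=\emptyset$, so there is nothing to hit and no perfect matchings are required. For the inductive step, suppose the statement holds for $k-1$ and let $\mathcal{C}$ be a collection of disjoint odd cycles with $|\mathcal{C}|\leq 5^{k-1}-1$.

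The key step is to produce a single perfect matching $M_{k-1}$ that intersects all but at most $5^{k-2}-1$ of the cycles in $\mathcal{C}$; once we have it, we apply the inductive hypothesis to the (disjoint, odd) sub-collection $\mathcal{C}'\subseteq\mathcal{C}$ of cycles missed by $M_{k-1}$, obtaining $M_1,\ldots,M_{k-2}$, and then $M_1,\ldots,M_{k-1}$ is the required list. To build $M_{k-1}$, I would first invoke Remark \ref{Remark3cuts} to get a perfect matching $N$ meeting every $3$-cut of $G$ in exactly one edge; as in Theorem \ref{TheoremOddness4}, for each $C\in\mathcal{C}$ pick a pair of adjacent edges $e_1^{C},e_2^{C}$ lying in $C\cap\overline{N}$ (possible since $|C|\geq 3$ is odd and $N$ meets $C$ in at most $(|C|-1)/2$ edges, hence two consecutive edges of $C$ avoid $N$). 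Set $c$ to be the indicator of $\bigcup_{C}\{e_1^{C},e_2^{C}\}$ and let $w(e)=\sfrac15$ on $N$ and $\sfrac25$ elsewhere; exactly as before, $w$ is a fractional perfect matching because $N$ meets every $3$-cut once, so $w(X)\geq 1$ on every odd-cut $X$. By Lemma \ref{LemmaFractionalPM} there is a perfect matching $M_{k-1}$ with
$$c\cdot\chi^{M_{k-1}}\;\geq\; c\cdot w\;=\;\sfrac25\cdot 2\cdot|\mathcal{C}|\;=\;\sfrac{4}{5}\,|\mathcal{C}|.$$
Each cycle $C$ contributes at most $2$ to $c\cdot\chi^{M_{k-1}}$ (its two marked edges are adjacent, so a matching contains at most one of them — wait, that caps the contribution at $1$ per cycle), so in fact $c\cdot\chi^{M_{k-1}}\leq |\{C: C\cap M_{k-1}\neq\emptyset\}|$, whence the number of cycles hit is at least $\sfrac{4}{5}|\mathcal{C}|$, i.e.\ the number missed is at most $\sfrac15|\mathcal{C}|\leq \sfrac15(5^{k-1}-1)<5^{k-2}$, so at most $5^{k-2}-1$. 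This is exactly the hypothesis needed to feed $\mathcal{C}'$ into the inductive step.

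For the ``moreover'' clause, apply the first part with $\mathcal{C}$ equal to the set of $\omega(G)$ odd cycles of a $2$-factor $\overline{M_1'}$ given by a minimal perfect matching $M_1'$: since $\omega(G)\leq 5^{k-1}-1$, we get $k-1$ perfect matchings $M_1,\ldots,M_{k-1}$, at least one of which meets each odd cycle of $\overline{M_1'}$. Then take the $k$ perfect matchings $M_1',M_1,\ldots,M_{k-1}$: the complement of their union is contained in $\overline{M_1'}$ minus the edges of $M_1\cup\cdots\cup M_{k-1}$, which is a subgraph of the $2$-factor $\overline{M_1'}$ in which every odd cycle has lost at least one edge, hence is a disjoint union of even cycles and paths, i.e.\ bipartite. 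The main obstacle is making the counting at the end of the inductive step tight enough: one must be careful that the fractional bound $\sfrac45|\mathcal{C}|$ genuinely forces the ``missed'' set below the $5^{k-2}-1$ threshold, which works precisely because $\sfrac15(5^{k-1}-1)=5^{k-2}-\sfrac15<5^{k-2}$, and that the two marked edges per cycle being \emph{adjacent} is what converts the weight bound into a bound on the number of cycles hit (rather than on a multiset count). A secondary point to verify carefully is that $\mathcal{C}'$ is still a collection of \emph{disjoint odd} cycles of $G$ — it is, being a sub-collection of $\mathcal{C}$ — so the inductive hypothesis applies verbatim.
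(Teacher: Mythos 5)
Your proposal is correct and follows essentially the same route as the paper's proof: induction on $k$, with the inductive step using a perfect matching $N$ meeting every $3$-cut once, a pair of adjacent marked edges on each cycle avoiding $N$, the fractional perfect matching with weights $\sfrac15$ on $N$ and $\sfrac25$ elsewhere, and Lemma \ref{LemmaFractionalPM} to hit at least $\sfrac45$ of the cycles. Your explicit justifications of why two adjacent edges of each odd cycle avoid $N$ and why adjacency of the marked edges converts the weight bound into a count of cycles hit are details the paper leaves implicit, but the argument is the same.
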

\begin{proof}
We proceed by induction on $k$. %check if we need k=2
For $k=1$, the assertion trivially holds since $\mathcal{C}$ is the empty set. Assume the result is true for some $k\geq 1$ and consider $k+1$. Let $C_{1}, C_{2}, \ldots, C_{t}$, with $t \leq 5^{k}-1$, be the odd cycles of $G$ in $\mathcal{C}$. Let $N$ be a perfect matching of $G$ which intersects every $3$-cut of $G$ once. For every $i\in\{1,\ldots,t\}$, there exists at least a pair of adjacent edges $e_{1}^{i}$ and $e_{2}^{i}$ belonging to $C_{i}\cap \overline{N}$. We define the vector $c\in \mathbb{R}^{|E(G)|}$ such that
$$c(e)= \left\{
\begin{array}{cl}
1 & $if $ e\in \cup_{i=1}^{t}\{e_{1}^{i}, e_{2}^{i}\},\\
0 & $otherwise.$
\end{array}\right.$$
We also define the vector $w\in \mathbb{R}^{\vert E(G)\vert}$ as follows:
$$w(e)= \left\{
\begin{array}{cl}
\sfrac{1}{5} & $if $ e \in N,\\
\sfrac{2}{5} & $otherwise.$
\end{array}\right.$$
As in the proof of Theorem \ref{TheoremOddness4}, $w$ is a fractional perfect matching of $G$ and by Lemma \ref{LemmaFractionalPM} there exists a perfect matching $M_{k}$ such that $c\cdot\chi^{M_{k}}\geq c\cdot w$. This implies that $$\vert\cup_{i=1}^{t}\{e_{1}^{i}, e_{2}^{i}\} \cap M_{k} \vert\geq 2 \times \sfrac{2}{5} \times t.$$ 
Let $\mathcal{C}'$ be the subset of $\mathcal{C}$ which contains the odd cycles of $\mathcal{C}$ with no edge of $M_{k}$. Then, $|\mathcal{C}'| \leq |\mathcal{C}| - \frac{4}{5} t = t - \frac{4}{5}t = \frac{t}{5} \leq 5^{k-1} - \frac{1}{5}$, and so $|\mathcal{C}'| \leq 5^{k-1}-1$. By induction, there exist  $k-1$ perfect matchings of $G$, say $M_{1}, \ldots, M_{k-1}$, having the required property with respect to $\mathcal{C}'$. Therefore, $M_{1},\ldots, M_{k}$ intersect all odd cycles in $\mathcal{C}$.
The second part of the statement easily follows by considering $\mathcal{C}$ to be the set of odd cycles in the complement of a minimal perfect matching $M$ of $G$, since the union of $M$ with the $k-1$ perfect matchings which intersect all the odd cycles in $\mathcal{C}$ has a bipartite complement.
\end{proof}
\begin{rem}\label{Remark DestroyingOddCycles FracPM}
We note that with every step made in the proof of Proposition \ref{Proposition ni lowerbound}, one could update the weight $w$ of the edges using the methods presented in \cite{FractionalPM,MazzuoccoloCovering} which gives a slightly better upper bound for $\omega(G)$.
For reasons of simplicity and brevity, we prefer the present weaker version of Proposition \ref{Proposition ni lowerbound}.
\end{rem}

%Up till now we know that given a graph $G$, if $k$ is the least integer such that $n_{k-1}<\omega(G)\leq n_{k}$, then there exist $k$ perfect matchings such that the complement of their union is bipartite, and we do not know how to find a smaller set of perfect matchings with the same property.
%\begin{exm}\label{ExampleOddness20}
%Let the oddness of $G$ be 20, and suppose we use the same method we used in Theorem \ref{TheoremOddness4} to destroy the odd cycles. Let $M_{1}$  be a minimal perfect matching of $G$. After choosing the pairs of adjacent edges on the odd cycles of $\overline{M_{1}}$, with the help of a perfect matching $N$ as defined in Theorem \ref{TheoremOddness4}, we notice that with the above same method we can destroy at least 16 of the 20 odd cycles of $\overline{M_{1}}$, because $$\sfrac{4}{5}\times 20=16,$$ and so using the same approach as in the case of $\omega(G)=4$, we know that $M_{1}\cup M_{2}$ destroys at least 16 odd cycles of $\overline{M_{1}}$. Hence, there remain at most 4 odd cycles to be destroyed. By re-choosing only the pairs of adjacent edges on the remaining odd cycles (at most 4), we get that there exists a perfect matching $M_{3}$ such that it intersects every remaining pair exactly once, since $$\sfrac{4}{5}\times 4 > 3.$$
%\end{exm}
\section{Extension of the \mbox{\boldmath{$S_{4}$}-Conjecture} to larger classes of cubic graphs} 
\subsection{Multigraphs}\label{Section Multigraphs}
In this section we discuss natural extensions of some previous conjectures to bridgeless cubic multigraphs. We note that bridgeless cubic multigraphs cannot contain any loops. 
We will make use of the following standard operation on parallel edges, referred to as \emph{smoothing}. Let $G'$ be a bridgeless cubic multigraph. Let $u$ and $v$ be two vertices in $G'$ such that there are exactly two parallel edges between them.

\begin{figure}[h]
      \centering
      \includegraphics[width=0.56\textwidth]{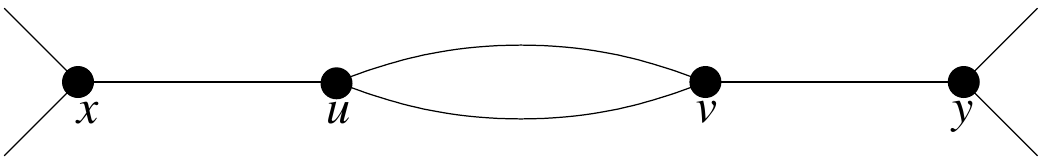}
      \caption{Vertices $x,u,v$ and $y$ in $G'$.}
      \label{Figure xuvy}
\end{figure}
Let $x$ and $y$ be the vertices adjacent to $u$ and $v$, respectively (see Figure \ref{Figure xuvy}). We say that we \emph{smooth $uv$} if we delete the vertices $u$ and $v$ from $G'$ and add an edge between $x$ and $y$ (even if $x$ and $y$ are already adjacent in $G'$). One can easily see that the resulting multigraph, say $G$, after smoothing $uv$ is again bridgeless and cubic. 

In what follows, we will say that a perfect matching $M$ of $G$ and a perfect matching $M'$ of $G'$  are \emph{corresponding} perfect matchings if the following holds: 

\[
 M =
  \begin{cases}
  M'\cup xy-\{xu,vy\} & $if $ xu\in M',\\
  M'-uv & $otherwise.$
  \end{cases}
\]

The following theorem can be easily proved by using smoothing operations.
\begin{theorem}\label{Theorem S4 Multigraphs}
The \mbox{$S_{4}$-Conjecture} is true if and only if every bridgeless cubic multigraph has an \mbox{$S_{4}$-pair}.
\end{theorem}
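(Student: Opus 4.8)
The plan is to establish the two directions separately. The forward direction is trivial: every simple bridgeless cubic graph is in particular a bridgeless cubic multigraph, so if every bridgeless cubic multigraph has an \mbox{$S_{4}$-pair}, then in particular every simple one does, which is exactly the \mbox{$S_{4}$-Conjecture}. The content is entirely in the converse: assuming the \mbox{$S_{4}$-Conjecture} (for simple graphs), I want to produce an \mbox{$S_{4}$-pair} in an arbitrary bridgeless cubic multigraph $G'$.

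First I would reduce to the simple case by repeatedly applying the smoothing operation described above. Given a bridgeless cubic multigraph $G'$ with a pair of parallel edges $uv$, smoothing $uv$ yields a bridgeless cubic multigraph $G$ with two fewer vertices; iterating, after finitely many smoothings we reach a simple bridgeless cubic graph $G^{*}$ (note $G'$ has no loops, and each smoothing strictly decreases the order, so the process terminates; one should check that a multigraph that is not simple always contains a smoothable pair of parallel edges, which holds because a cubic multigraph with a parallel pair between $u$ and $v$ cannot have a third edge between them without creating a vertex of degree exceeding three, so $u$ and $v$ each have exactly one further neighbour $x$, $y$). By the \mbox{$S_{4}$-Conjecture}, $G^{*}$ has an \mbox{$S_{4}$-pair}.

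The main step is then to lift an \mbox{$S_{4}$-pair} back across a single smoothing, since the general statement follows by reverse induction on the number of smoothings. So suppose $G$ is obtained from $G'$ by smoothing $uv$ (replacing the path $x,u,v,y$ through the double edge by a single edge $xy$), and suppose $M_{1},M_{2}$ is an \mbox{$S_{4}$-pair} of $G$; I want corresponding perfect matchings $M_{1}',M_{2}'$ of $G'$, in the sense defined just before the theorem, that form an \mbox{$S_{4}$-pair} of $G'$. For each $i$: if $xy\in M_{i}$, set $M_{i}'=(M_{i}\setminus\{xy\})\cup\{xu,vy\}$; if $xy\notin M_{i}$, set $M_{i}'=M_{i}\cup\{uv'\}$ where $uv'$ denotes one of the two parallel edges (say the "non-smoothed" copy, to be fixed consistently). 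These are perfect matchings of $G'$. The key claim is that $\overline{M_{1}'\cup M_{2}'}$ (in $G'$) is bipartite given that $\overline{M_{1}\cup M_{2}}$ (in $G$) is. I would argue this at the level of cycles: any odd cycle of $\overline{M_{1}'\cup M_{2}'}$ either avoids $\{u,v\}$ entirely, in which case it is already a cycle of $\overline{M_{1}\cup M_{2}}$ (edges not meeting $u,v$ are common to both graphs), contradicting bipartiteness of the latter; or it passes through $u$ and $v$, necessarily using the edges $xu$ and $vy$ (these lie in $\overline{M_{1}'\cup M_{2}'}$ precisely when $xy\in\overline{M_{1}\cup M_{2}}$) together with the other parallel edge — but one checks from the case distinction that whenever $xu,vy\in\overline{M_{1}'\cup M_{2}'}$ the parallel edge $uv'$ lies in $M_{1}'\cup M_{2}'$, so the cycle through $u,v$ traverses $x,u,v,y$ and contracts to a cycle of $\overline{M_{1}\cup M_{2}}$ through $xy$ of the same parity, again a contradiction. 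Hence $G'$ has an \mbox{$S_{4}$-pair}.

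The one genuinely fiddly point — and the main obstacle to watch — is the bookkeeping of which of the two parallel edges receives the label $uv$ versus $uv'$, and making sure the parity of cycles is preserved under contraction: a cycle of $\overline{M_{1}\cup M_{2}}$ through $xy$ has the same number of edges mod $2$ as the corresponding cycle of $\overline{M_{1}'\cup M_{2}'}$ through the path $x,u,v,y$, because the single edge $xy$ is replaced by the two edges $xu,vy$ — wait, that changes parity, so in fact the parallel edge $uv'$ must also be on the cycle, giving three edges in place of one, restoring parity. I would spell this out carefully with a small figure reference to Figure \ref{Figure xuvy}, since it is exactly the place where a careless argument would break. Everything else is routine.
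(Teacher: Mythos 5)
Your approach is exactly the one the paper intends (the paper omits the proof, saying only that it follows ``by using smoothing operations''), and your lifting step is sound: the case analysis on whether $xy$ lies in $M_1\cup M_2$, together with the observation that a cycle of $\overline{M_1'\cup M_2'}$ through $u$ and $v$ replaces the single edge $xy$ by the three edges $xu$, the surviving parallel edge, and $vy$ (hence preserves parity), correctly shows that an \mbox{$S_4$-pair} of $G$ lifts to corresponding perfect matchings forming an \mbox{$S_4$-pair} of $G'$. The self-correction in your last paragraph is the right one.

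There is, however, one concrete error in the reduction. Your justification that every non-simple bridgeless cubic multigraph admits a smoothable pair of parallel edges --- namely that a third edge between $u$ and $v$ would create a vertex of degree exceeding three --- is false: three parallel edges give $u$ and $v$ degree exactly three, and the connected multigraph in question is then $C_{2,3}$, for which the vertices $x$ and $y$ do not exist and smoothing is undefined. Moreover, $C_{2,3}$ genuinely arises as the terminal object of your iteration (for instance, the four-vertex multigraph consisting of a $4$-cycle $u\,v\,y\,x$ with the edges $uv$ and $xy$ doubled smooths to $C_{2,3}$), so the claim that repeated smoothing always reaches a \emph{simple} graph is wrong. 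The fix is trivial --- $C_{2,3}$ has an \mbox{$S_4$-pair} by taking two distinct parallel edges as the two perfect matchings, leaving a single edge as complement --- but it must be stated as a separate base case; note that the paper itself singles out $G'=C_{2,3}$ in the proof of Theorem \ref{Theorem FrequencyS4Multigraphs} for exactly this reason. With that base case added, your proof is complete.
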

% \begin{proof}
% The backward direction is clearly true, so assume that the \mbox{$S_{4}$-Conjecture} is true. Let $G'$ be a bridgeless cubic multigraph, and assume that $G'$ is a minimum counterexample. Then, $G'$ cannot be a simple graph as we are assuming that the \mbox{$S_{4}$-Conjecture} is true. Therefore, $G'$ must have some parallel edges. Clearly, the unique bridgeless cubic multigraph on two vertices $C_{2,3}$, has an \mbox{$S_{4}$-pair} and so assume that $G'\neq C_{2,3}$. Thus, $G'$ must have vertices $x,u,v$ and $y$ as in Figure \ref{Figure xuvy}. Let $G$ be the resulting multigraph after smoothing $uv$. By minimality of $G'$, $G$ has an \mbox{$S_{4}$-pair}, say $M_{1}$ and $M_{2}$, with corresponding perfect matchings $M_{1}'$ and $M_2'$ in $G'$. However, the latter clearly form an \mbox{$S_{4}$-pair} for $G'$, contradicting our initial assumption.
% \end{proof}

Now we show that Conjecture \ref{ConjectureS4**} can also be extended to multigraphs.

\begin{theorem}\label{Theorem FrequencyS4Multigraphs}
Let $i,j$ and $k$ be three integers in $\{0,1,2\}$ such that $i+j+k=2$ and let $w$ be a vertex in a bridgeless cubic multigraph $G'$. Then, the \mbox{$S_{4}$-Conjecture} is true if and only if $G'$ has an \mbox{$S_{4}$-pair} in which the edges incident to $w$ in a given order have frequencies $(i,j,k)$.
\end{theorem}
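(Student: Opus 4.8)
The plan is to reduce the multigraph statement to the corresponding statement for simple graphs, namely Conjecture \ref{ConjectureS4**} (which by Theorem \ref{Theorem Equivalence S4 S4**} is equivalent to the \mbox{$S_{4}$-Conjecture}), via the smoothing operation introduced in Section \ref{Section Multigraphs}. One direction is trivial: if every bridgeless cubic multigraph has an \mbox{$S_{4}$-pair} with the prescribed frequencies at a chosen vertex, then in particular every bridgeless cubic \emph{simple} graph does, so Conjecture \ref{ConjectureS4**} holds and hence the \mbox{$S_{4}$-Conjecture} holds. For the substantive direction, I would assume the \mbox{$S_{4}$-Conjecture} — equivalently Conjecture \ref{ConjectureS4**} — and take an arbitrary bridgeless cubic multigraph $G'$ with a distinguished vertex $w$ and target frequencies $(i,j,k)$, $i+j+k=2$, on its three incident edges $a,b,c$.

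The first step is to handle the parallel edges of $G'$ one at a time by smoothing, exactly as in the proof sketch of Theorem \ref{Theorem S4 Multigraphs}, but being careful about the vertex $w$. If $w$ is not incident to any parallel pair, I repeatedly smooth parallel pairs until I reach a simple bridgeless cubic graph $G$ in which $w$ survives with three well-defined incident edges corresponding to $a,b,c$ (a smoothing never destroys $w$ and, when it merges two edges into one, I let the new edge inherit the role of whichever of $a,b,c$ it came from — note at most one of $a,b,c$ can be affected at a time, and the correspondence of perfect matchings from Section \ref{Section Multigraphs} preserves the frequency of that edge). Then I invoke Conjecture \ref{ConjectureS4**} on $(G,w)$ with the same $(i,j,k)$ to get an \mbox{$S_{4}$-pair} $M_1,M_2$ of $G$, and I pull it back through the sequence of smoothings using the corresponding-perfect-matchings rule: at each undone smoothing the complement of the union gains either an isolated pair of edges or nothing of consequence to bipartiteness, so the complement stays bipartite, and the frequencies of $a,b,c$ are unchanged. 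This yields the desired \mbox{$S_{4}$-pair} of $G'$.

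The one case that needs extra care — and the main obstacle — is when $w$ itself is incident to a parallel pair, since then smoothing at $w$ would delete $w$ and we could no longer speak of "the edges incident to $w$." Here $w$ lies on a pair of parallel edges $ww'$ together with a third edge; I would argue directly. If $G'$ has a parallel pair at $w$, then $w'$ also has degree $3$, contributing one further edge; one can check that the local structure forces $(i,j,k)$, up to reordering, to be realizable by hand: the single non-parallel edge at $w$ must carry whatever frequency is left, and the two parallel edges must split the remaining weight, so only finitely many local patterns arise and each is directly constructible from an \mbox{$S_{4}$-pair} of the smoothed graph $G$ (obtained by smoothing $ww'$) by choosing how the corresponding perfect matchings route through the pair. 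I expect the bookkeeping of which of the $\binom{4}{2}$ orderings of $(i,j,k)$ with $i+j+k=2$ forces which local pattern to be the fiddly part, but it is a finite check and no new idea beyond the correspondence of perfect matchings and the freedom already guaranteed by Conjecture \ref{ConjectureS4**} on the (simple) smoothed graph is needed.
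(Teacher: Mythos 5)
Your proposal is correct and follows essentially the same route as the paper: smooth parallel pairs, transfer $S_{4}$-pairs between $G'$ and the smoothed graph via corresponding perfect matchings, and treat separately the case where $w$ itself lies on a parallel pair by prescribing the frequency of the merged edge $xy$ and then routing the two matchings freely through the pair to split the remaining weight between the parallel edges. The only real difference is presentational: the paper organizes this as a minimal-counterexample (inductive) argument, which automatically covers the point your write-up glosses over, namely that a single smoothing may still leave a multigraph (and may even create a new parallel pair at $w$), so one cannot always invoke Conjecture~\ref{ConjectureS4**} on a \emph{simple} smoothed graph directly.
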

\begin{proof}
It suffices to assume that the \mbox{$S_{4}$-Conjecture} is true and only show the forward direction, by Theorem \ref{Theorem S4 Multigraphs}. Let $G'$ be a minimal counterexample and suppose it has some parallel edges. If $G'=C_{2,3}$ then the result clearly follows. So assume $G'\neq C_{2,3}$. Let $a,b$ and $c$ be the edges incident to $w$ such that the frequencies $(i,j,k)$ are to be assigned to $(a,b,c)$. We proceed by considering two cases: when $w$ has two parallel edges incident to it (Figure \ref{Figure FrequencyS4Multigraphs_A}) and otherwise (Figure \ref{Figure FrequencyS4Multigraphs_B}).
\begin{figure}[h]
      \centering
      \includegraphics[width=0.45\textwidth]{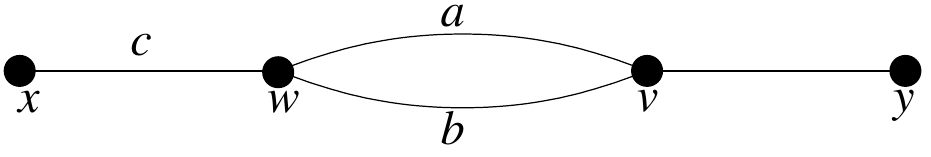}
      \caption{Case 1 in the proof of Theorem \ref{Theorem FrequencyS4Multigraphs}.}
      \label{Figure FrequencyS4Multigraphs_A}
\end{figure} 

\noindent\textbf{Case 1.} Let $G$ be the resulting multigraph after smoothing $wv$. By minimality of $G'$, $G$ has an \mbox{$S_{4}$-pair} (say $M_{1}$ and $M_{2}$) in which $\nu(xy)=k$. It is easy to see that a pair of corresponding perfect matchings in $G'$ give $\nu_{G'}(c)=\nu_{G'}(vy)=k$ and can be chosen in such a way such that $\nu_{G'}(a)=i$ and $\nu_{G'}(b)=j$, a contradiction to our initial assumption. Therefore, we must have Case 2.
\begin{figure}[h]
      \centering
      \includegraphics[width=0.35\textwidth]{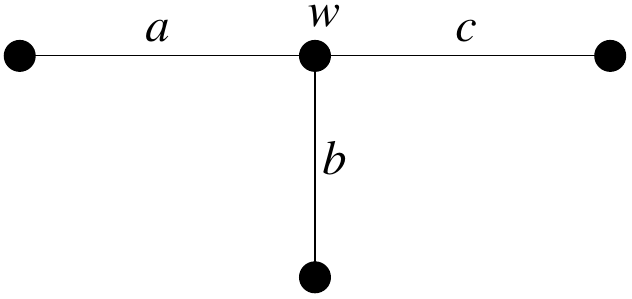}
      \caption{Case 2 in the proof of Theorem \ref{Theorem FrequencyS4Multigraphs}.}
      \label{Figure FrequencyS4Multigraphs_B}
\end{figure} 

\noindent\textbf{Case 2.} Let $G$ be the resulting multigraph after smoothing some parallel edge in $G'$ and let $a_{w}, b_{w}$ and $c_{w}$ be the corresponding edges incident to $w$ in $G$ after smoothing is done. In $G$, there exists an \mbox{$S_{4}$-pair} such that the frequencies of $(a_{w}, b_{w},c_{w})$ are equal to $(i,j,k)$. Clearly, the corresponding perfect matchings in $G'$ form an \mbox{$S_{4}$-pair} in which the frequencies of $(a,b,c)$ are $(i,j,k)$, a contradiction, proving Theorem \ref{Theorem FrequencyS4Multigraphs}.
\end{proof}
Using the same ideas as in Theorem \ref{Theorem S4 Multigraphs} and Theorem \ref{Theorem FrequencyS4Multigraphs} one can also state analogous results for the \mbox{Fan-Raspaud} Conjecture in terms of multigraphs.

\subsection{Graphs having bridges}\label{Section Graphs with bridges}
Since every perfect matching must intersect every bridge of a cubic graph, then the \break \mbox{Fan-Raspaud} Conjecture cannot be extended to cubic graphs containing bridges. The situation is quite different for the \mbox{$S_{4}$-Conjecture} as Theorem \ref{Theorem S4 for graphs with bridges} shows. By Errera's Theorem \cite{Errera} we know that if all the bridges of a connected cubic graph lie on a single path, then the graph has a perfect matching. We use this idea to show that there can be graphs with bridges that can have an \mbox{$S_{4}$-pair}. 

\begin{theorem}\label{Theorem S4 for graphs with bridges}
Let $G$ be a connected cubic graph having $k$ bridges, all of which lie on a single path, for some positive integer $k$. If the \mbox{$S_{4}$-Conjecture} is true, then $G$ admits an \mbox{$S_{4}$-pair}.
\end{theorem}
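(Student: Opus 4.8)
The plan is to reduce the statement about a cubic graph $G$ with $k$ bridges lying on a single path to the (assumed) $S_4$-Conjecture applied to one or more bridgeless cubic graphs, by systematically removing the bridges and capping the resulting dangling edges. First I would set up the structure: since all bridges of $G$ lie on a single path $P$, deleting the edge set of $P$'s bridges decomposes $G$ into $k+1$ components $B_0, B_1, \ldots, B_k$, each of which is a cubic graph with either one or two ``half-edges'' (the former endpoints of bridges). The two extreme blocks $B_0$ and $B_k$ each have exactly one such half-edge, while the internal blocks $B_1,\ldots,B_{k-1}$ each have exactly two, and every bridge $e_i$ joins $B_{i-1}$ to $B_i$. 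The key observation (this is essentially Errera's Theorem, which we may invoke) is that each $B_i$ together with its half-edges can be completed to a bridgeless cubic graph: a block with two half-edges can be closed by joining them (or, if that creates a loop or to be safe, by inserting a small gadget); a block with one half-edge can be closed by attaching a copy of $K_4$ minus an edge, or a triangle with a pendant handled appropriately, so that the half-edge becomes an edge of a bridgeless cubic graph $\widehat{B_i}$.

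Next I would apply the $S_4$-Conjecture, in the stronger prescribed-frequency form available to us (Theorem~\ref{Theorem Equivalence S4 S4**}, or for multigraphs Theorem~\ref{Theorem FrequencyS4Multigraphs}), to each $\widehat{B_i}$. The point is that a bridge, lying as it does in every perfect matching, forces the corresponding half-edge to have frequency $2$ in any $S_4$-pair; so I would use the prescribed-frequency version to demand an $S_4$-pair of $\widehat{B_i}$ in which the edge playing the role of each incident bridge has frequency exactly $2$. Then the restrictions of these $S_4$-pairs to the original blocks $B_i$ glue together along the bridges $e_1,\ldots,e_k$ (each of which we assign to both perfect matchings) to produce two perfect matchings $M_1, M_2$ of the whole graph $G$. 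One checks $M_1, M_2$ are genuine perfect matchings: inside each $B_i$ this is inherited from $\widehat{B_i}$ after the capping gadgets are stripped, and at the bridges the frequency-$2$ condition guarantees consistency.

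Finally I would verify the bipartiteness condition: $\overline{M_1 \cup M_2}$ restricted to each block $B_i$ is (a subgraph of) the bipartite graph obtained from the $S_4$-pair of $\widehat{B_i}$, hence bipartite; and since each bridge $e_i$ lies in $M_1 \cap M_2$, it is \emph{not} in $\overline{M_1 \cup M_2}$, so $\overline{M_1 \cup M_2}$ is just the disjoint union of the bipartite graphs coming from the blocks, and therefore bipartite. Hence $M_1, M_2$ form an $S_4$-pair of $G$.

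The main obstacle I expect is the capping step for the \emph{internal} blocks: a block $B_i$ with two half-edges need not become a \emph{simple} bridgeless cubic graph when we join its half-edges — we might create a parallel edge or a loop, or the join might even disconnect nothing but produce a bridge again if $B_i$ itself has internal bridges (which it cannot, by the single-path hypothesis, but this needs to be argued carefully). The clean way around this is to pass to multigraphs and invoke Theorem~\ref{Theorem FrequencyS4Multigraphs} rather than Theorem~\ref{Theorem Equivalence S4 S4**}, so that parallel edges created by capping are harmless; loops genuinely cannot occur since each half-edge of an internal block comes from a distinct bridge with distinct endpoints. A secondary subtlety is bookkeeping when $k=1$ (only two blocks, each with a single half-edge) versus $k\geq 2$; both are handled by the same construction, but the degenerate small cases (e.g.\ a block that is a single vertex, forcing a configuration like $C_{2,3}$ with a pendant structure) should be checked directly, exactly as the degenerate case $G' = C_{2,3}$ was treated in the proof of Theorem~\ref{Theorem FrequencyS4Multigraphs}.
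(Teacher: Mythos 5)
Your treatment of the internal blocks matches the paper's: join the two bridge-endpoints $v_i,u_i$ of each internal block by a new edge, invoke the multigraph version of the prescribed-frequency result (Theorem~\ref{Theorem FrequencyS4Multigraphs}) to force that edge into both perfect matchings, and then swap it for the two incident bridges. The genuine gap is in your capping of the two \emph{extreme} blocks. An extreme block has exactly one vertex of degree $2$ (the endpoint $u_1$ of the first bridge), so any completion to a cubic (multi)graph adds exactly one new edge at $u_1$; if that edge goes to a vertex-disjoint gadget (a copy of $K_4$ minus an edge, a triangle with a pendant, or anything else), it is the unique edge between the gadget and the block and is therefore itself a bridge of $\widehat{B_0}$. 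Hence $\widehat{B_0}$ is never bridgeless, and neither the \mbox{$S_4$-Conjecture} nor Theorem~\ref{Theorem FrequencyS4Multigraphs} applies to it. No gadget fixes this; the only viable move --- and the one the paper makes --- is to \emph{delete} $u_1$ and join its two neighbours $x_1,y_1$ by a new edge, obtaining a bridgeless cubic multigraph $B_1'$.

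Once you do that, your prescription ``the edge playing the role of the bridge has frequency $2$'' no longer makes sense for the extreme blocks (that edge has disappeared), and a second subtlety appears that your write-up does not address: in $G$ the new edge $x_1y_1$ corresponds to the path $x_1u_1y_1$, i.e.\ it gets subdivided once on the way back, and subdividing an edge that lies on an (even) cycle of the bipartite complement $\overline{M_1\cup M_2}$ would flip the parity of that cycle and could create an odd cycle. The paper neutralises both problems at once by prescribing frequencies $(1,1,0)$ at $x_1$, with the two original edges at $x_1$ getting frequency $1$ and the new edge $x_1y_1$ getting frequency $0$: then $u_1$ is matched by the bridge in both $M_1$ and $M_2$ while $x_1$ and $y_1$ stay matched inside the block, and $x_1$ has degree $1$ in the complement, so $x_1y_1$ is a pendant edge of the complement, lies on no cycle, and its subdivision is harmless. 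Your closing claim that ``$\overline{M_1\cup M_2}$ restricted to each block is a subgraph of the bipartite complement of the capped graph'' silently assumes each block sits inside its capped graph unchanged, which fails precisely at the extreme blocks, where a vertex is removed and an edge is subdivided in the reverse direction.
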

\begin{figure}[h]
      \centering
      \includegraphics[width=0.75\textwidth]{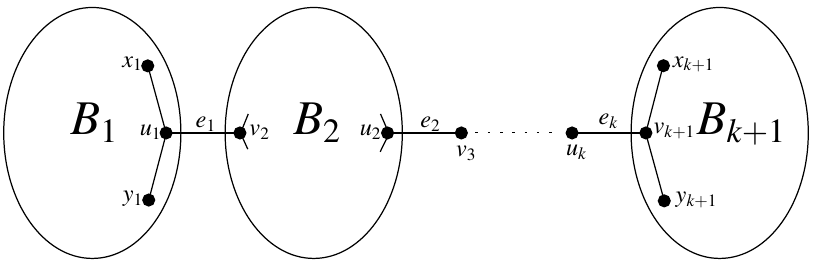}
      \caption{$G$ with $k$ bridges lying all on the same path.}
      %\labe{}
\end{figure}
\begin{proof}
Let $B_{1}, B_{2}, \ldots, B_{k+1}$ be the 2-connected components of $G$ and let $e_{1},\ldots, e_{k}$ be the bridges of $G$ such that $e_{i}=u_{i}v_{i+1}$ for each $i\in\{1,\ldots,k\}$, where $u_{i}\in V(B_{i})$ and $v_{i+1}\in V(B_{i+1})$. Let $x_{1}$ and $y_{1}$ be the two vertices adjacent to $u_{1}$ in $B_{1}$, and let $x_{k+1}$ and $y_{k+1}$ be the two vertices adjacent to $v_{k+1}$ in $B_{k+1}$. Let $B_{1}'=(B_{1}-u_{1})\cup x_{1}y_{1}$ and $B_{k+1}'=(B_{k+1}-v_{k+1})\cup x_{k+1}y_{k+1}$. Also, let $B_{i}'=B_{i}\cup v_{i}u_{i}$ for every $i\in \{2,\ldots,k\}$. Clearly, $B_{1}', \ldots, B_{k+1}'$ are bridgeless cubic multigraphs. Since we are assuming that the \mbox{$S_{4}$-Conjecture} holds, then, by Theorem \ref{Theorem S4 Multigraphs}, for every $i\in\{1,\ldots,k+1\}$, $B_{i}'$ has an \mbox{$S_{4}$-pair}, say $M_{1}^{i}$ and $M_{2}^{i}$. Using Theorem \ref{Theorem FrequencyS4Multigraphs}, we choose the \mbox{$S_{4}$-pair} in:
\begin{itemize}
\item $B_{1}'$, such that the two edges originally incident to $x_{1}$ (not $x_{1}u_{1}$) both have frequency $1$,
\item $B_{i}'$, for each $i\in \{2,\ldots,k\}$, such that $\nu_{B_{i}'}(v_{i}u_{i})=2$, and
\item $B_{k+1}'$, such that the two edges originally incident to $x_{k+1}$ (not $x_{k+1}v_{k+1}$) both have frequency $1$.
\end{itemize}
Let $M_{1}:=(\cup_{i=1}^{k+1}M_{1}^{i})\cup(\cup_{j=1}^{k}e_{j})-\cup_{l=2}^{k}v_{l}u_{l}$, and let $M_{2}:=(\cup_{i=1}^{k+1}M_{2}^{i})\cup(\cup_{j=1}^{k}e_{j})-\cup_{l=2}^{k}v_{l}u_{l}$. Then, $M_{1}$ and $M_{2}$ are an \mbox{$S_{4}$-pair} of $G$.
\end{proof}

Finally, we remark that there exist cubic graphs which admit a perfect matching however do not have an \mbox{$S_{4}$-pair}. For example, since the edges $u_{i}v_{i}$ in Figure \ref{Figure S4bridgesNOTTRUE} are bridges, then they must be in any perfect matching. Consequently, every pair of perfect matchings do not intersect the edges of the odd cycle $T$. This shows that it is not possible to extend the $S_4$-Conjecture to the entire class of cubic graphs.

\begin{figure}[h]
      \centering
      \includegraphics[width=0.4\textwidth]{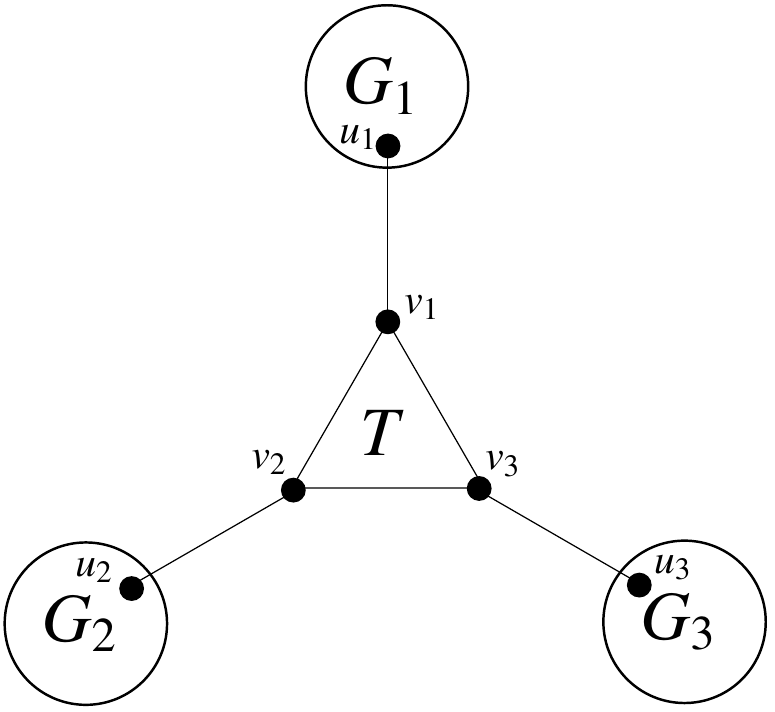}
      \caption{A cubic graph with bridges having no \mbox{$S_{4}$-pair}.}
      \label{Figure S4bridgesNOTTRUE}
\end{figure}

\section{Remarks and problems}

Many problems about the topics presented above remain unsolved: apart from asking if we can solve the \mbox{Fan-Raspaud} Conjecture and the \mbox{$S_{4}$-Conjecture} completely, or at least partially for higher oddness, we do not know which are those graphs containing bridges which admit an \mbox{$S_{4}$-pair} and we do not know either if the \mbox{$S_{4}$-Conjecture} is equivalent to Conjecture \ref{ConjectureOddCut}. 
Here we would like to add some other specific open problems.

For a positive integer $k$, we define $\omega_{k}$ to be the largest integer such that any graph with oddness at most $\omega_{k}$, admits $k$ perfect matchings with a bipartite complement. Clearly, for $k=1$, we have $\omega_1=0$, since the existence of a perfect matching of $G$ with a bipartite complement is equivalent to the $3$-edge-colourability of $G$. Moreover, the \mbox{$S_{4}$-Conjecture} is equivalent to  $\omega_{k}=\infty$, for $k\geq 2$, but a complete result to this is still elusive. Proposition \ref{Proposition ni lowerbound} (see also Remark \ref{Remark DestroyingOddCycles FracPM}) give a lower bound for $\omega_{k}$ and it would be interesting if this lower bound can be significantly improved. We believe that the following problem, weaker than the \mbox{$S_{4}$-Conjecture}, is another possible step forward. 
\begin{problem}
Prove the existence of a constant $k$ such that every bridgeless cubic graph admits $k$ perfect matchings whose union has a bipartite complement.
\end{problem}
It is also known that not every perfect matching can be extended to an \mbox{FR-triple} and neither to a \mbox{Berge-Fulkerson} cover, where the latter is a collection of six perfect matchings which cover the edge set exactly twice. We do not see a way how to produce a similar argument for \mbox{$S_{4}$-pairs} and so we also suggest the following problem.
\begin{problem}\label{Problem Given a Perfect Matching S4}
Establish whether any perfect matching of a bridgeless cubic graph can be extended to an \mbox{$S_{4}$-pair}.
\end{problem}
%It can be proven that Problem \ref{Problem Given a Perfect Matching S4} is equivalent to the existence of a perfect matching which intersects all the odd cycles in a given collection of disjoint odd cycles of a bridgless cubic graph.

It can be shown that Problem \ref{Problem Given a Perfect Matching S4} is equivalent to saying that given any collection of disjoint odd cycles in a bridgeless cubic graph, then there exists a perfect matching which intersects all the odd cycles in this collection.

\end{document}